\numberwithin{equation}{section}
\numberwithin{figure}{section}
\newtheorem{Theorem}{Theorem}
\numberwithin{Theorem}{section}
\newtheorem{Definition}[Theorem]{Definition}
\newtheorem{Corollary}[Theorem]{Corollary}
\begin{document}

\title{A comparative study of stochastic resonance for a model with two pathways by escape times, linear response, invariant measures and the conditional Kolmogorov-Smirnov Test}

\author{
Tommy Liu\footnote{University of Reading tommy.liu.academic@gmail.com}
}
\maketitle

\begin{abstract}
We consider stochastic resonance for a diffusion with drift given by a potential, which has two metastable states and two pathways between them. Depending on the direction of the forcing, the height of the two barriers, one for each path, will either oscillate alternating or in synchronisation. 
We consider a simplified model given by a continuous time Markov Chains with two states. 
This was done for  alternating and synchronised wells. 
The invariant measures are derived for both cases and shown to be constant for the synchronised case. 
A PDF for the escape time from an oscillatory potential is studied. 
Methods of detecting stochastic resonance are presented, which are linear response, signal-noise ratio, energy, out-of-phase measures, relative entropy and entropy. 
A new statistical test called the  conditional Kolmogorov-Smirnov test is developed, which can be used to analyse stochastic resonance.  
An explicit two dimensional potential is introduced, the critical point structure derived and the dynamics, the invariant state  and escape time studied numerically. 
The six measures are unable to detect the stochastic resonance in the case of synchronised saddles. The distribution of escape times however not only shows a clear sign of stochastic resonance, but changing the direction of the forcing from alternating to synchronised  saddles an additional resonance at double the forcing frequency starts to appear. 
The conditional KS test reliably detects the stochastic resonance.
This paper is mainly based on the thesis \cite{tommy_thesis}.
\end{abstract}

\tableofcontents

\section{Introduction}

\subsection{Outline of Problem}

Consider the following problem. 
Let $X_t^\epsilon$ be the random variable describing the trajectory of a diffusion process in $\mathbb{R}^r$ where $t$ is the time and $\epsilon^2$ is the variance level. 
More precisely we consider processes described by the following type of stochastic differential equation 
\begin{align*}
dX^\epsilon_t=b\left(X^\epsilon_t,t\right)dt+\epsilon\,dW_t
\end{align*}
where $b:\mathbb{R}^r\times \mathbb{R}\longrightarrow \mathbb{R}^r$ and $W_t$ is a Wiener process in $\mathbb{R}^r$. 
We suppose that the drift term $b$ has the form
\begin{align*}
b(x,t)=-\nabla V_0 (x) + F\cos \Omega t 
\end{align*}
where $F,x\in\mathbb{R}^r$ and $V_0:\mathbb{R}^r\longrightarrow \mathbb{R}$ is called the unperturbed potential. 
We consider unperturbed potentials with two or more minimas (wells). 
Most importantly, we consider potentials where there are multiple pathways between the wells. 
To our knowledge systems with two pathways have not been studied in the context of stochastic resonance.

Consider the case $\Omega=0$ and where the noise $\epsilon$ is very small. 
The particle will stay very close to one of the wells of the potential and will occasionally escape to the other well. 
The time of the actual transition from one well to the other is very short compare to the time it stays in any particular well. 

Now consider the case where $\Omega>0$. For particular choices of $\Omega>0$ and $\epsilon>0$, these transitions between the two wells will become synchronised with the driving frequency $\Omega$. 
This is called stochastic resonance. 
Thus the term \emph{noise induced synchronisation} was used for systems where the amplitude of the forcing $F$ was not large \cite{tk_14, tk_15} (see also the discussions in \cite{tk_16}). 
New insights into the exact manner of these synchronised transitions will be studied in this paper,
which may be more appropriate  in light of the results obtained in this paper.

For small noise $\epsilon$, one would expect that stochastic resonance depends only on the essential properties of the system, such as the height difference between the wells and the pathways for escape. 
We investigate what effects these multiple pathways have on the appearance of stochastic resonance. 
Varying $F$, $\Omega$ and $\epsilon$ should thus reveal the qualitative structure of the unperturbed potential $V_0$. 
In this paper we test this paradigm by studying a two dimensional example with two wells and two independent pathways between them, see Section
\ref{sum_chap_mexican_hat_toy_model}.

\subsection{Historical Background}

Stochastic resonance has attracted  interest among mathematicians and physicist. 
An overview of the studies that have occurred in both physics and mathematics are given here. 

\subsubsection{Physical Background}

Stochastic resonance was first observed in 1981 \cite{benzi81,tk_17c,tk_17b}. 
The first example \cite{benzi81} considered transitions between two metastable states to model the cyclic occurrences of ice ages. 
Since then many examples of stochastic resonance were found in optics 
\cite{opt_1_PhysRevLett.60.2626,
opt_2_Guidoni1995,
opt_3_PhysRevA.49.2199,
opt_4_PhysRevLett.68.3375},
electronics 
\cite{elec_1_FAUVE19835,
elect_2_PhysRevE.49.R1792,
elect_3_Mantegna1995,
elect_4_PhysRevLett.76.563,
elect_5_PhysRevLett.74.3161,
elect_6_:/content/aip/journal/jap/76/10/10.1063/1.358258,
elect_7_PhysRevA.39.4323,
elect_8_PhysRevE.49.4878,
elect_9_PhysRevLett.67.1799},
neuronal systems 
\cite{neuron_1_PhysRevLett.67.656}, 
quantum systems 
\cite{quant_1_:/content/aip/journal/jap/77/6/10.1063/1.358720,quant_2_:/content/aip/journal/apl/66/1/10.1063/1.114161} 
and paddlefish 
\cite{fish_1_PhysRevLett.84.4773,fish_2_FREUND200271}.
Stochastic resonance could be thought of as quasi-deterministically periodic transition between two metastable states. 
For example, the climate of the Earth could be modelled by two states. There is a state corresponding to an Ice Age and another corresponding to the opposite of an Ice Age, a so-called ``Hot Age''. As the Earth's climate cyclically changes many times between Cold Ages and Hot Ages, its behaviour could be modelled by stochastic resonance.

A range of techniques for example
linear response 
\cite{lin_1_TUS:TUS1787,lin_2_doi:10.1137/0143037}, 
signal-to-noise ratio
\cite{sign_1_PhysRevA.39.4668,sign_2_escape_2_PhysRevA.41.4255}
and distribution of escape times
\cite{escape_1_PhysRevA.42.3161,
sign_2_escape_2_PhysRevA.41.4255,
escape_3_PhysRevE.49.4821}
were used to define, analyse and study stochastic resonance. 
These techniques along with other examples of stochastic resonance are reviewed in the long overview paper  by Gammaitoni, H\"anggi, Jung and Marchesoni \cite{RevModPhys.70.223}. 
We will evaluate the usefulness of some of these techniques for our problem, see Section \ref{sum_chap_results}.

\subsubsection{Mathematical Background}

There are various mathematical studies of stochastic resonance. 
These often involve different orders of approximations for small noise levels. 
The first and second order of approximations are discussed below.
Adiabatic large deviation is also presented. 

In the first leading order of approximation, a key element of study is to control the escape times from the wells as given by the so called large deviation theory, see the monograph of Freidlin and Wentzell \cite{freidlin98}.
The distribution of the exit time was derived by Day in \cite{tk_19} and by 
Galves,
Kifer, Olivieri and Vares
\cite{freid_2_doi:10.1137/1119057,
oliveri_expoen,
freid_3}. 
To go beyond leading order has been much more difficult for the transition problem between two wells as WKB theory could up to now not be rigorously applied.

The next order of approximation was rigorously derived by  Bovier, Eckhoff, Gayrard, Klein \cite{Bovier02metastabilityin} and Berglund and Gentz  \cite{kram_2_2008arXiv0807.1681B} using techniques from potential theory.  
Berglund and Gentz in a series of papers studied the situation of low, non-quadratic barriers and drifts not given by autonomous potentials 
\cite{kram_2_2008arXiv0807.1681B, tk_16}. 
A review of different techniques used to derive Kramers' formula can be found in the review paper
\cite{krammer_review_2011arXiv1106.5799B}. 

In \cite{ld_1_comp_1_Freidlin2000333} Friedlin considered stochastic resonance in the adiabatic regime. 
This means the diffusion can effectively be described by a Markov process which describes the jumps between wells. 
This problem was revisited by Hermann, Imkeller and Pavlyukevich, see Chapter 4 in \cite{tran2014} and references therein, to derive results uniformly for varying time scale to identify the optimal resonance point asymptotically for small noise even outside the adiabatic regime leading to different logarithmic corrections including the famous cycling effect discovered by Day \cite{tk_20}, see also \cite{tk_18} for the connection with stochastic resonance.
Escape time outside of adiabatic regime is studied in \cite{tk_00_doi:10.1137/120887965}.

As mentioned above in leading order the transitions of the diffusion process $X^\epsilon_t$ between the wells can be approximated by a two state Markov Chain $Y^\epsilon_t=\pm1$ which have been studied \cite{pav_thesis,pav_imkell,pav03,tran2014}. 
Further comparative studies of the stochastic resonance for the diffusion case $X^\epsilon_t$ versus the Markov Chain $Y^\epsilon_t$ case were done by 
Hermann, Imkeller, Pavlyukevich and Peithmann 
in 
\cite{ld_3_comp_4_imkeller2004stochastic,
comp_2_Herrmann2005,
ld_2_comp_3_herrmann2005large,
herrmann2005}.
A collection of papers on comparative studies between stochastic resonance in  diffusion and Markov Chains can be found in the monograph \cite{tran2014}. 
One of the main conclusions in 
\cite{ld_2_comp_3_herrmann2005large,
herrmann2005,
comp_2_Herrmann2005,
tran2014}
is rigorously showing that using linear response and signal-to-noise ratio to analyse  stochastic resonance in the diffusion case $X^\epsilon_t$ gives a different result to analysing the Markov Chain case $Y^\epsilon_t=\pm1$ with the same techniques even asymptotically in the small noise limit.
Other common methods used to study stochastic resonance  include invariant measures and Fourier transforms. 
We consider six measures of stochastic resonance frequently used and considered by Pavlyukevich in his thesis \cite{pav_thesis,tran2014} which are  linear response, signal-to-noise ratio, energy, out-of-phase measure, relative entropy and entropy.

In this paper we will study stochastic resonance on a two dimensional toy model, in  both the diffusion and Markov Chain cases, and where there are two independent pathways between the wells going through two different saddles. 
The escape times and the six measures of stochastic resonance introduced above are studied.

\section{Static Potential}
We remind ourselves of the theory of escape times and escape rate from a well of a static potential. 
These results follow from large deviation and potential theory. 

\subsection{Large Deviation, Potential Theory and Kramers Formula}
\label{chap_sect_kram}
Let $V:\mathbb{R}^r:\longrightarrow\mathbb{R}$.
Let 
$x\in\mathbb{R}^r$ be a well and  
$z_i\in\mathbb{R}^r$ be  saddles labelled by $i=1\ldots n$. 
The saddles would be gateways providing a passage for escape from the well. 
Define 
\begin{align*}
\Delta V_i=V(z_i)-V(x)
\end{align*}
which is the height difference between the well and the $i$th saddle. 
For small noise $\epsilon$, an approximate expression can be estimated for the escape time of the particle going through  the $i$th saddle. 
It is well known from the theory of large deviation \cite{freidlin98}, that in the lowest order of the noise $\epsilon$ the mean exit time is given by 
\begin{align*}
\tau_i=e^{+2\Delta V_i /\epsilon^2}
\end{align*}
Inverting this gives the escape rate
\begin{align*}
R_i=e^{-2\Delta V_i /\epsilon^2}
\end{align*}
and the total escape rate would be to sum over all the saddles
\begin{align*}
R=\sum_{i=1}^n R_i
=\sum_{i=1}^n e^{-2\Delta V_i /\epsilon^2}.
\end{align*}
The order correction is done by adding a coefficient called Kramers coefficient and the resulting corrected rate is called Kramers rate
\begin{align*}
R_i=k_ie^{-2\Delta V_i /\epsilon^2}
\quad
\text{where} 
\quad
k_i=\frac{\sqrt{\left|\nabla^2V(x)\right|}}{2\pi}
\frac{\left|\lambda(z_i)\right|}{\sqrt{\left\Vert \nabla^2V(z_i) \right\Vert}}
\end{align*}
where $\left|\nabla^2(x)\right|$ denotes the determinant of the  Hessian of the potential at the well $x$, 
$\left\Vert \nabla^2V(z_i) \right\Vert$ denotes the modulus of the determinant of the potential at the saddle $z_i$
and 
$\left|\lambda(z_i)\right|$ denotes the minimum eigenvalue of the Hessian of the potential at the saddle $z_i$. 
This gives the escape rate in the next order of approximation to be 
\begin{align*}
R&=\sum_{i=1}^nR_i
=\sum_{i=1}^n k_ie^{-2\Delta V_i /\epsilon^2}
\end{align*}
which is rewritten as 
\begin{align*}
R=
\frac{\sqrt{\left|\nabla^2V(x)\right|}}{2\pi}
\sum_{i=1}^n
\frac{\left|\lambda(z_i)\right|}{\sqrt{\left\Vert \nabla^2V(z_i) \right\Vert}}
\exp\left\{\frac{-2\left(V(z_i)-V(x)\right)}{\epsilon^2}\right\}
\end{align*}
The last order of approximation for higher noise $\epsilon$ is done by bounding the error on Kramers coefficient. 
This is 
\begin{align*}
k_i=\frac{\sqrt{\left|\nabla^2V(x)\right|}}{2\pi}
\frac{\left|\lambda(z_i)\right|}{\sqrt{\left\Vert \nabla^2V(z_i) \right\Vert}}
\left(
\frac{1}{1+\mathcal{O}\left(\frac{\epsilon^2}{2}\ln\frac{\epsilon^2}{2}\right)}
\right)
\end{align*}
which is rewritten as 
\begin{align}
\label{chap_2_krammers}
\frac{1}{k_i}
=
\frac{2\pi}{\sqrt{\left|\nabla^2V(x)\right|}}
\frac{\sqrt{\left\Vert \nabla^2V(z_i) \right\Vert}}{\left|\lambda(z_i)\right|}
\left[
1+\mathcal{O}\left(\frac{\epsilon^2}{2}\ln\frac{\epsilon^2}{2}\right)
\right]
\end{align}
The derivation of Kramers formula was done in   \cite{Bovier02metastabilityin}.

\section{Oscillatory Potential}
\label{chapter_oscil_times}
Let $V:\mathbb{R}^r\rightarrow\mathbb{R}$ be a potential with two wells. 
This potential is subjected to a periodic forcing $F\in\mathbb{R}^r$ with frequency $\Omega$ and perturbed by noise $\epsilon$, which is described by the SDE.
\begin{align}
\dot{X^\epsilon_t}=-\nabla V+F\cos(\Omega t)+\epsilon \dot{W_t}
\label{chap_4:markov}
\end{align}
where $W_t$ is a Wiener process in $\mathbb{R}^r$ and  $F\in\mathbb{R}^r$. 
We call $X^\epsilon_t$  the diffusion case. 
In this Section we study Equation \ref{chap_4:markov} by considering the escape times between the two wells, modelling Equation \ref{chap_4:markov} by a continuous time Markov Chain, considering six measures of stochastic resonance and a new statistical test called the conditional Kolmogorov-Smirnov Test.

\subsection{Markov Chain Reduction}

Stochastic resonance usually involves studying transitions between two stable states. 
If one solely concentrate on the transitions times one can reduce the model to a continuous time Markov chain $Y_t^\epsilon$ with state space $Y^\epsilon_t=\pm 1$ symbolizing the two stable states.
\begin{align*}
X^\epsilon_t
\longrightarrow
Y^\epsilon_t
\end{align*}
Let $w_{l}(t)$ denote the position of the left well at time $t$ and $w_{r}(t)$ the position of the right well at time $t$. 
Note that $w_l(t)$ and $w_r(t)$ are also continuous in time. 
Let $R\in\mathbb{R}$ be constant. 
The reduction from the  $X^\epsilon_t$ to the Markov Chain $Y^\epsilon_t$ is
\begin{align*}
Y^\epsilon_t&=
\left\{
\begin{array}{lll}
-1 & \text{if} &
\left|X^\epsilon_t-w_l(t)\right|\leq R\\[0.5em]
+1 & \text{if} &
\left|X^\epsilon_t-w_r(t)\right|\leq R\\[0.5em]
Y^\epsilon_s & \text{if} & \left|X^\epsilon_t-w_l(t)\right|> R \quad \text{and} \quad \left|X^\epsilon_t-w_r(t)\right|> R
\end{array}
\right.
\end{align*}
where $s$ is given by 
\begin{align*}
s&=\max\left\{s_1,s_2\right\}\\
\text{where}\quad s_1&=\max_{s<t}\left\{s:\left|X^\epsilon_s-w_l(s)\right|\leq R\right\}\\
\text{where}\quad s_2&=\max_{s<t}\left\{s:\left|X^\epsilon_s-w_r(s)\right|\leq R\right\}
\end{align*}
When $Y^\epsilon_t=-1$ we say the particle is  in the left well 
and when $Y^\epsilon_t=+1$ we say the particle is  in the right well. 
Hence we keep $Y^\epsilon_t$ constant even when the particle is in neither well. 
Only when it enters the other well would $Y^\epsilon_t$ change sign.

The escape time from the left to right well $\tau_{-1+1}$ and from the right to left well $\tau_{+1-1}$ are defined in the following way
\begin{align*}
\tau_{-1+1}&=l\left\{t:Y^\epsilon_t=-1\right\}\quad \text{where} \quad \left\{t:Y^\epsilon_t=-1\right\} \quad \text{is an interval}\\
\tau_{+1-1}&=l\left\{t:Y^\epsilon_t=+1\right\}\quad \text{where} \quad \left\{t:Y^\epsilon_t=+1\right\} \quad \text{is an interval}
\end{align*}
where $l$ denotes the Lebesgue measure. 
In other words the time spent being in the state $Y^\epsilon_t=-1$ is $\tau_{-1+1}$ and the time spent being in the state $Y^\epsilon_t=+1$ is $\tau_{+1-1}$. 
These intervals will always be closed intervals. 
The process $Y^\epsilon_t$ has two states, hence each sample is a piecewise constant function.
The length of each piece is the escape time $\tau_{-1+1}$ or $\tau_{+1-1}$. 
At every point in time it is possible to define a state probability, that is the probability of the trajectory being  $-1$ or $+1$
\begin{align*}
P\left(Y_t=-1\right)=\nu_-(t)
\quad \text{and} \quad 
P\left(Y_t=+1\right)=\nu_+(t)
\end{align*}
A  continuous time Markov Chain model for Equation \ref{chap_4:markov} is studied next by studying its state probabilities.

\subsection{Continuous Time Markov Chain}
Consider a two state continuous time Markov Chain given by $Y_t=\pm1$. 
The probability of transiting from $Y_t=-1$ to $Y_t=+1$ in a small  time interval $[t,t+d t]$ is $p_{-1+1}\left([t,t+d t]\right)\sim p(t)dt$.  
Similarly the probability of transiting from $Y_t=+1$ to $Y_t=-1$ in a small  time interval $[t,t+d t]$ is $p_{+1-1}\left([t,t+d t]\right)\sim q(t)dt$. 
The probability of $Y_t$ staying at $-1$ in the small time interval $[t,t+d t]$ is $p_{-1-1}\left([t,t+d t]\right)$.
Similarly the probability of $Y_t$ staying at $+1$ in the small time interval $[t,t+d t]$ is $p_{+1+1}\left([t,t+d t]\right)$.
These probabilities satisfy 
\begin{align*}
p_{-1-1}+p_{-1+1}=1
\quad \text{and} \quad 
p_{+1-1}+p_{+1+1}=1
\end{align*}
Let $p:\mathbb{R}\rightarrow \mathbb{R}$ and $q:\mathbb{R}\rightarrow \mathbb{R}$ be cyclic functions on the interval $[0,T]$ where $T$ is the period. 
The behaviour of $\nu(t)=
\left(
\begin{array}{cc}
\nu_-(t)&\nu_+(t)
\end{array}
\right)^\dagger$ 
is described by 
\begin{align}
\frac{d\nu}{dt}
&=\label{markov_chain_main_eqn}
Q^\dagger
\nu
\quad 
\text{where}
\quad 
Q=
\left(
\begin{array}{cc}
-p(t) & p(t)\\[0.5em]
q(t) & -q(t)
\end{array}
\right)
\end{align}
and $Q$ is the infinitesimal generator. 
The aim now is to derive the state probability by solving this differential equation  for various forms of $p$ and $q$. 
The extra conditions we use are 
\begin{align}
\nu_-(t)+\nu_+(t)=1\label{markov_chain_main_condition_1}
\quad \text{and} \quad 
\nu'_-(t)+\nu'_+(t)=0
\end{align}
for all times $t$ and the initial conditions at $t=0$ are $\nu_-(0)$ and $\nu_+(0)$. 

After a very long time the state probabilities $\nu_\pm(\cdot)$ should not depend on the initial state probabilities $\nu_\pm(0)$. 
At time infinity $\nu_\pm(\cdot)$ should also be cyclic on $[0,T]$. 
Let the time be given by $t=NT+n$ where $N$ is a discrete number of periods.
This leads us to define the invariant measure as the state probabilities in the limit as $N\longrightarrow\infty$
\begin{align*}
\overline{\nu}(n):=\lim_{N\longrightarrow\infty}\nu(NT+n)
\end{align*}
The rate of convergence to the invariant measure would depend on the value of $p$ and $q$ themselves. Define the relaxation time $T_{relax}$ as the first time $t=T_{relax}$ such that
\begin{align*}
\left|
\overline{\nu}\left(T_{relax}\right)
-\nu\left(T_{relax}\right)
\right|
\leq
e^{-1}
\end{align*}
which is a measure of the rate of convergence to the invariant measure.

\subsubsection{Continuous Time Markov Chain - Alternating Saddles $p\neq q$}
Notice that $p$ may be interpreted as  the probability of escape from the left well and $q$ as  the probability of escape from the right well. 
If $p$ and $q$ are cyclic over $[0,T]$, then this can be interpreted as modelling a potential with periodic forcing in continuous time. 

\begin{Theorem}\label{chap_4_thm:equal_contin}
Let $p\neq q$ and $t\geq0$.
The state probabilities are given by 
\begin{align*}
\nu_-(t)&=\frac{\nu_-(0)+\int_0^tq(s)\exp\left\{\int_0^sp(u)+q(u)\,du\right\}\,ds}{\exp\left\{\int_0^tp(u)+q(u)\,du\right\}}\\[0.5em]
\nu_+(t)&=\frac{\nu_+(0)+\int_0^tp(s)\exp\left\{\int_0^sp(u)+q(u)\,du\right\}\,ds}{\exp\left\{\int_0^tp(u)+q(u)\,du\right\}}
\end{align*}
\end{Theorem}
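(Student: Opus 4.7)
The plan is to reduce the two-dimensional system in Equation \ref{markov_chain_main_eqn} to a single scalar first-order linear ODE by exploiting the normalisation constraint \ref{markov_chain_main_condition_1}, and then solve that ODE by the standard integrating factor technique. Writing out $\frac{d\nu}{dt}=Q^\dagger\nu$ componentwise gives
\begin{align*}
\nu'_-(t)=-p(t)\,\nu_-(t)+q(t)\,\nu_+(t),\qquad
\nu'_+(t)=\phantom{-}p(t)\,\nu_-(t)-q(t)\,\nu_+(t),
\end{align*}
and substituting $\nu_+(t)=1-\nu_-(t)$ into the first equation I would obtain the scalar equation
\begin{align*}
\nu'_-(t)+\bigl(p(t)+q(t)\bigr)\nu_-(t)=q(t).
\end{align*}

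Next I would introduce the integrating factor $\mu(t)=\exp\bigl\{\int_0^t p(u)+q(u)\,du\bigr\}$, which is well defined since $p,q$ are assumed to be (continuous) cyclic functions on $[0,T]$. Multiplying through gives $\frac{d}{dt}\bigl[\mu(t)\nu_-(t)\bigr]=q(t)\mu(t)$, and integrating from $0$ to $t$ with the initial condition $\nu_-(0)$ yields
\begin{align*}
\mu(t)\nu_-(t)=\nu_-(0)+\int_0^t q(s)\,\mu(s)\,ds,
\end{align*}
which upon division by $\mu(t)$ reproduces the claimed expression for $\nu_-(t)$. The formula for $\nu_+(t)$ then follows either by interchanging the roles of $p,q$ and $\nu_-(0),\nu_+(0)$ (using the symmetry of the system under swapping the two states) or by applying $\nu_+(t)=1-\nu_-(t)$ and simplifying using the identity $\nu_-(0)+\nu_+(0)=1$ together with $\int_0^t\bigl(p(s)+q(s)\bigr)\mu(s)\,ds=\mu(t)-1$.

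Finally, I would verify consistency by checking that the derived expressions satisfy $\nu_-(t)+\nu_+(t)=1$ for all $t$, since this identity was used implicitly; adding the two right-hand sides, the numerator becomes $\nu_-(0)+\nu_+(0)+\int_0^t(p(s)+q(s))\mu(s)\,ds = 1+\mu(t)-1 = \mu(t)$, which indeed equals the common denominator. There is no real obstacle here beyond bookkeeping; the only subtlety is to ensure that the reduction via the normalisation condition is legitimate, which holds because $\nu_-'+\nu_+'=0$ is preserved by the generator $Q^\dagger$ (the column sums of $Q^\dagger$ vanish), so the constraint $\nu_-+\nu_+=1$ is compatible with the dynamics whenever it holds at $t=0$.
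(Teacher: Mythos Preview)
Your proof is correct and complete. The paper itself does not include a proof of this theorem (it states the result and proceeds directly to prove the subsequent Corollary~\ref{corollary_invariant_measure_contin}), so there is nothing to compare against; your reduction via the normalisation constraint to a scalar linear ODE and solution by integrating factor is exactly the standard argument one would expect here.
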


\begin{Corollary}
\label{corollary_invariant_measure_contin}
For the state probabilities in Theorem \ref{chap_4_thm:equal_contin}
the invariant measures are
\begin{align*}
\overline{\nu}_-(t)
&=
\frac{\int^t_0p(s)g(s)\,ds}{g(t)}
+\frac{\int^T_0p(s)g(s)\,ds}{g(t)\left(g(T)-1\right)}\\[0.5em]
\overline{\nu}_+(t)
&=
\frac{\int^t_0q(s)g(s)\,ds}{g(t)}
+\frac{\int^T_0q(s)g(s)\,ds}{g(t)\left(g(T)-1\right)}\\[0.5em]
\text{where}\quad
g(t)&=\exp\left\{\int^t_0p(u)+q(u)\,du\right\}
\end{align*}
\end{Corollary}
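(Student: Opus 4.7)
The plan is to substitute the explicit formulas from Theorem \ref{chap_4_thm:equal_contin} into the definition $\overline{\nu}(n) = \lim_{N \to \infty} \nu(NT+n)$ and exploit the periodicity of $p$ and $q$ to evaluate the limit. I would focus on $\overline{\nu}_-$ first, since $\overline{\nu}_+$ follows identically after swapping roles (or alternatively from $\overline{\nu}_+ = 1 - \overline{\nu}_-$ using \eqref{markov_chain_main_condition_1}).

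First I would rewrite $g(NT+n)$ in a factored form. Since $p+q$ is $T$-periodic, $\int_0^{NT+n}(p(u)+q(u))\,du = N\int_0^T(p(u)+q(u))\,du + \int_0^n(p(u)+q(u))\,du$, so that
\begin{align*}
g(NT+n) = g(T)^N\, g(n).
\end{align*}
Next I would split the numerator integral into full periods plus a remainder, $\int_0^{NT+n} = \sum_{k=0}^{N-1}\int_{kT}^{(k+1)T} + \int_{NT}^{NT+n}$, and on each subinterval substitute $s = kT + u$. Periodicity of $q$ (or $p$, as appropriate) together with the factoring $g(kT+u) = g(T)^k g(u)$ then reduces each piece to $g(T)^k \int_0^T q(u)g(u)\,du$, and the remainder to $g(T)^N \int_0^n q(u)g(u)\,du$. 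Summing the geometric series gives
\begin{align*}
\int_0^{NT+n} q(s)g(s)\,ds = \frac{g(T)^N - 1}{g(T) - 1}\int_0^T q(u)g(u)\,du + g(T)^N \int_0^n q(u)g(u)\,du.
\end{align*}

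Dividing through by $g(NT+n) = g(T)^N g(n)$ produces three terms: one involving $\nu_-(0)/(g(T)^N g(n))$ that vanishes, a geometric-series term with prefactor $(g(T)^N-1)/g(T)^N \to 1$, and a term that is already independent of $N$. Collecting the surviving pieces yields the stated formula for $\overline{\nu}_-$; the analogous computation for $\overline{\nu}_+$ (or the identity $\overline{\nu}_+ = 1 - \overline{\nu}_-$) then delivers the companion expression.

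The only genuinely delicate step is justifying the limit $g(T)^{-N} \to 0$: this requires $g(T) > 1$, equivalently $\int_0^T (p+q)\,du > 0$, which is a mild non-degeneracy assumption on the rates (they cannot both vanish identically). The rest of the argument is bookkeeping: being careful that $p$ and $q$ line up with the correct state probabilities when applying the Theorem, and that the change of variables on each period uses the correct factorisation $g(kT+u) = g(T)^k g(u)$. No further analytic subtleties arise because the state probabilities are given by a closed-form expression to which periodicity applies term by term.
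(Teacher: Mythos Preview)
Your approach is essentially identical to the paper's: both factor $g(NT+t)=g(T)^N g(t)$ via periodicity, split the numerator integral into full periods plus a remainder to obtain a geometric series, and then pass to the limit $N\to\infty$. The only minor differences are that you explicitly flag the non-degeneracy condition $g(T)>1$ (the paper leaves this implicit), and you track $q$ in the computation for $\overline{\nu}_-$ as Theorem~\ref{chap_4_thm:equal_contin} dictates, whereas the paper's own proof writes $p$ there---this reflects a $p$/$q$ labelling swap between the Theorem and Corollary statements in the paper itself, not a discrepancy in your reasoning.
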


\begin{proof}
We derive the invariant measure for $\overline{\nu}_-(t)$. 
The case for $\overline{\nu}_+(t)$ is similar. 
Consider the fact that $p(\cdot)$ and $q(\cdot)$ are cyclic on $[0,T]$ and let $i$ be an integer, then the following integral can be rewritten as 
\begin{align*}
\int^{(i+1)T}_{iT}
p(s)g(s)\,ds
=
g(T)^i\int^T_0p(s)g(s)\,ds
\end{align*}
Let the time be given by $NT+t$ where $N$ is an integer number of periods. 
This means the following integral can be written as 
\begin{align*}
\int^{NT+t}_0
p(s)g(s)\,ds
=
g(T)^N
\int^t_0
p(s)g(s)\,ds
+
\int^T_0p(s)g(s)\,ds
\sum_{i=0}^{N-1}
g(T)^i
\end{align*}
So the state probability is equal to 
\begin{align*}
\nu_-(NT+t)
=
\frac{\nu_-(0)}{g(T)^Ng(t)}
+
\frac{\int^t_0p(s)g(s)\,ds}{g(t)}
+
\frac{\int^T_0p(s)g(s)\,ds}{g(t)}
\frac{1}{g(T)-1}
\left(
1-\frac{1}{g(T)^N}
\right)
\end{align*}
Letting $N\longrightarrow\infty$ gives the required result. 
\end{proof}

\subsubsection{Continuous Time Markov Chain - Synchronised Saddles $p=q$}
If the forcing is such that the  height of the barrier stays the same for both pathways the same then this corresponds  to the case $p=q$

\begin{Theorem}\label{chap_4_thm:not:equal_contin}
Let $p=q$ and $t\geq0$.
The state probabilities are given by 
\begin{align*}
\nu_-(t)&=\frac{1}{2}-\frac{\nu_+(0)-\nu_-(0)}{2}\exp\left\{-2\int_0^tp(s)\,ds\right\}\\[0.5em]
\nu_+(t)&=\frac{1}{2}+\frac{\nu_+(0)-\nu_-(0)}{2}\exp\left\{-2\int_0^tp(s)\,ds\right\}
\end{align*}
\end{Theorem}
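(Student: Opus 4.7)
The plan is to use the normalisation constraint $\nu_-(t)+\nu_+(t)=1$ from \eqref{markov_chain_main_condition_1} to reduce the $2\times 2$ system \eqref{markov_chain_main_eqn} to a single scalar ODE for the ``imbalance'' $\delta(t):=\nu_+(t)-\nu_-(t)$, solve that ODE explicitly by an integrating factor, and then recover $\nu_\pm$ from $\delta$ and the normalisation.

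Concretely, writing out $\nu' = Q^\dagger\nu$ componentwise gives
\begin{align*}
\nu_-'(t) &= -p(t)\nu_-(t)+q(t)\nu_+(t),\\
\nu_+'(t) &= \phantom{-}p(t)\nu_-(t)-q(t)\nu_+(t).
\end{align*}
Setting $p=q$ and subtracting the first from the second, both right-hand sides collapse to $\pm p(t)\bigl(\nu_-(t)-\nu_+(t)\bigr)$, so $\delta'(t)=-2p(t)\,\delta(t)$. This is a linear first-order ODE whose solution is $\delta(t)=\delta(0)\exp\!\left\{-2\int_0^t p(s)\,ds\right\}$, where $\delta(0)=\nu_+(0)-\nu_-(0)$.

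Combining with the normalisation $\nu_-(t)+\nu_+(t)=1$ (which is preserved by the flow since the columns of $Q$ sum to zero, so adding the two ODEs yields $\nu_-'+\nu_+'=0$ in agreement with \eqref{markov_chain_main_condition_1}), we recover
\begin{align*}
\nu_+(t)=\tfrac12\bigl(1+\delta(t)\bigr),\qquad \nu_-(t)=\tfrac12\bigl(1-\delta(t)\bigr),
\end{align*}
which gives exactly the formulas claimed.

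There is essentially no obstacle here: the symmetry $p=q$ decouples the difference $\nu_+-\nu_-$ from the sum $\nu_++\nu_-$, reducing the problem to an elementary scalar linear ODE. The only thing to be careful of is checking that the constraint $\nu_-+\nu_+\equiv 1$ is consistent with the generator (so we are allowed to substitute it rather than solve a genuine $2\times 2$ system), which is immediate from the row sums of $Q^\dagger$ applied to $(1,1)^\dagger$.
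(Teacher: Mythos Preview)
Your argument is correct. The paper actually states this theorem without proof (as it does for Theorem~\ref{chap_4_thm:equal_contin}), so there is no proof in the paper to compare against; your reduction to the scalar ODE $\delta'(t)=-2p(t)\delta(t)$ via the imbalance $\delta=\nu_+-\nu_-$ together with the conserved normalisation $\nu_-+\nu_+=1$ is the natural route and is carried out cleanly. An equally short alternative would be to specialise the general formula of Theorem~\ref{chap_4_thm:equal_contin} to $p=q$, write $P(t)=\int_0^t p(u)\,du$, evaluate $\int_0^t p(s)e^{2P(s)}\,ds=\tfrac12\bigl(e^{2P(t)}-1\bigr)$, and simplify; this yields the same expressions without introducing $\delta$.
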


\begin{Corollary}\label{cor_half_continuous}
For the state probabilities in Theorem \ref{chap_4_thm:not:equal_contin}
the invariant measures are
\begin{align*}
\overline{\nu}_-(t)=\overline{\nu}_+(t)=\frac{1}{2}
\end{align*}
\end{Corollary}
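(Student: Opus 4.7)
The plan is to substitute $t = NT+n$ directly into the closed-form expressions of Theorem \ref{chap_4_thm:not:equal_contin} and take the limit $N \to \infty$, exploiting the periodicity of $p$. The exponential factor is the only $N$-dependent quantity, so the whole corollary reduces to controlling one integral.

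First I would split the integral in the exponent using periodicity: since $p$ is cyclic on $[0,T]$,
\begin{equation*}
\int_0^{NT+n} p(s)\,ds \;=\; N\!\int_0^T p(s)\,ds \;+\; \int_0^n p(s)\,ds.
\end{equation*}
Denote $\alpha = \int_0^T p(s)\,ds$. Because $p$ plays the role of a transition rate (nonnegative, and not identically zero over a period in any non-degenerate model), $\alpha > 0$. Then
\begin{equation*}
\exp\!\left\{-2\int_0^{NT+n} p(s)\,ds\right\} \;=\; e^{-2N\alpha}\exp\!\left\{-2\int_0^n p(s)\,ds\right\} \;\xrightarrow[N\to\infty]{}\; 0,
\end{equation*}
uniformly in $n \in [0,T]$.

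Substituting this into the formula for $\nu_-(NT+n)$ from Theorem \ref{chap_4_thm:not:equal_contin},
\begin{equation*}
\overline{\nu}_-(n) \;=\; \lim_{N\to\infty}\left[\frac{1}{2} - \frac{\nu_+(0)-\nu_-(0)}{2}\,e^{-2N\alpha}\exp\!\left\{-2\int_0^n p(s)\,ds\right\}\right] \;=\; \frac{1}{2},
\end{equation*}
and identically for $\overline{\nu}_+(n)$. Note that the initial data $\nu_\pm(0)$ enters only through the bounded factor $(\nu_+(0)-\nu_-(0))/2$, which is killed by the decaying exponential, confirming the expected loss of memory.

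The only subtlety worth flagging is the tacit assumption $\alpha > 0$; if $p \equiv 0$ then $Q \equiv 0$ and $\nu(t)$ is frozen at its initial value, so no invariant measure in the sense used here exists. Otherwise the argument is essentially a one-line limit, so I do not anticipate any real obstacle — the corollary is a direct consequence of the explicit formula combined with periodicity.
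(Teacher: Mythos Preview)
Your argument is correct. The paper in fact states this corollary without proof, treating it as immediate from Theorem~\ref{chap_4_thm:not:equal_contin}; your derivation is exactly the natural one and parallels the proof the paper does give for the companion Corollary~\ref{corollary_invariant_measure_contin} (split the integral over periods, let $N\to\infty$). Your remark that $\alpha=\int_0^T p(s)\,ds>0$ is needed is a fair caveat the paper leaves implicit.
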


\subsection{Probability Density Function of Escape Times}
The  escape rates from the left to right  are denoted by $R_{-1+1}(\cdot)$
and right to left escape rates are denoted by
$R_{+1-1}(\cdot)$. 
The PDFs for the escape times are given by the Theorem below. 

\begin{Theorem}\label{chap_4_pdf_thm}
Let $u$ be the time coordinate of entry into a well, 
then the PDFs for the escape occurring at time coordinate $t>u$ are 
\begin{align*}
p_{-}(t,u)&=R_{-1+1}(t)\exp\left\{-\int^t_uR_{-1+1}(s)\,ds\right\}\\[0.5em] 
p_{+}(t,u)&=R_{+1-1}(t)\exp\left\{-\int^t_uR_{+1-1}(s)\,ds\right\}
\end{align*}
where $p_{-}(t,u)$ is for left to right and $p_{+}(t,u)$ is for right to left. 
\end{Theorem}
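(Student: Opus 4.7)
The plan is to interpret the escape rate $R_{-1+1}(t)$ as a time-inhomogeneous hazard rate: given that the particle is still in the left well at time $t$, the conditional probability of escape during $[t,t+dt]$ is $R_{-1+1}(t)\,dt+o(dt)$. With this reading the theorem becomes the standard identity for an inhomogeneous exponential (first-jump) distribution, and the proof reduces to a short ODE argument for the survival function. Only the case $p_-$ needs to be written out; $p_+$ follows by swapping the roles of the two wells.

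First I would define the survival function
\begin{align*}
S(t,u)=\mathbb{P}\bigl(\text{particle still in the left well at time }t\bigm|\text{entered at time }u\bigr),
\end{align*}
so that $S(u,u)=1$ and the PDF of the escape time is $p_-(t,u)=-\partial_t S(t,u)$ by definition. Next, from the hazard-rate interpretation of $R_{-1+1}$, the Markov/memoryless property of the underlying process on the infinitesimal scale gives
\begin{align*}
S(t+dt,u)=S(t,u)\bigl(1-R_{-1+1}(t)\,dt\bigr)+o(dt),
\end{align*}
which, after rearrangement and passing to the limit $dt\to 0$, yields the linear first-order ODE
\begin{align*}
\frac{\partial S}{\partial t}(t,u)=-R_{-1+1}(t)\,S(t,u),\qquad S(u,u)=1.
\end{align*}
Solving this ODE gives $S(t,u)=\exp\{-\int_u^t R_{-1+1}(s)\,ds\}$, and differentiating with respect to $t$ reproduces the claimed formula for $p_-(t,u)$.

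The one technical point that needs care, and which I expect to be the main obstacle, is the justification of the infinitesimal relation for $S(t+dt,u)$ in the time-inhomogeneous setting. A clean way to handle this is to partition the interval $[u,t]$ into $N$ subintervals of length $\Delta s=(t-u)/N$, apply the definition of the escape rate on each piece to obtain
\begin{align*}
S(t,u)=\prod_{k=0}^{N-1}\bigl(1-R_{-1+1}(u+k\Delta s)\,\Delta s+o(\Delta s)\bigr),
\end{align*}
take logarithms, and let $N\to\infty$; the Riemann sum converges to $-\int_u^t R_{-1+1}(s)\,ds$ under mild regularity of $R_{-1+1}$ (which is guaranteed here since $R_{-1+1}$ comes from a smooth periodic forcing). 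Once the exponential form of $S$ is established, differentiation is immediate, and the same argument applied to the right well gives the companion formula for $p_+(t,u)$.
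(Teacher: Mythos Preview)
Your proposal is correct and essentially matches the paper's argument. The paper's proof is precisely your discretization route: it partitions $[u,t]$ into $N$ pieces of length $\delta t$, writes the staying probability on each piece as $e^{-R_{-1+1}(t)\delta t}$, takes the product, and lets the exponent become the Riemann sum $\sum_{i=1}^N -R_{-1+1}(u+(i-1)\delta t)\,\delta t \to -\int_u^t R_{-1+1}(s)\,ds$, then multiplies by $R_{-1+1}(t)\,\delta t$ for the escape in the final interval. Your ODE formulation via the survival function is just a cleaner repackaging of the same computation; the ``technical point'' you flag and resolve is exactly the paper's entire proof.
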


\begin{proof}
We consider escaping from the left well.
The right well is similar.
Divide the time interval $[u,t]$ into many small time intervals 
\begin{align*}
\delta t =\frac{t-u}{N}
\end{align*}
Similar to how we derived the invariant measures we want to derive the probability of escape in a very small time interval $[t,t+\delta t]$. 
This is given by 
\begin{align*}
p_{-1+1}([t,t+\delta t])
&=p(t)\delta t \\
&=1-e^{-R_{-1+1}(t)\delta t}\\
&\approx R_{-1+1}(t)\delta t 
\end{align*}
which is valid for small $\delta t$. 
Large deviations allow us to say even more about the escape time $\tau_{-1+1}$ and $\tau_{+1-1}$. 
Theorem 1 in \cite{oliveri_expoen} shows that it is an exponentially distributed random variable. 
The probability of staying in the left well is given by 
\begin{align*}
p_{-1-1}([t,t+\delta t])
&=1-p_{-1+1}([t,t+\delta t])\\
&=1-p(t)\delta t \\
&=1-\left(1-e^{-R_{-1+1}(t)\delta t}\right)\\
&=e^{-R_{-1+1}(t)\delta t}
\end{align*}
We want to know the probability of escaping in the time interval $[t,t+\delta t]$ given that the particle has entered at $u$ and stayed up to time $t$.
This is given by 
\begin{align*}
p_{-1-1}([u,t])p_{-1+1}([t,t+\delta t])
&=\prod_{i=1}^{N}p_{-1-1}\left([u+(i-1)\delta t,u+i\delta t]\right)
p_{-1+1}([t,t+\delta t])\\
&=\prod_{i=1}^{N}\exp\left\{-R_{-1+1}\left(u+(i-1)\delta t\right)\delta t\right\}
p_{-1+1}([t,t+\delta t])\\
&=\exp\left\{\sum_{i=1}^{N}-R_{-1+1}\left(u+(i-1)\delta t\right)\delta t\right\}
p_{-1+1}([t,t+\delta t])\\
&=\exp\left\{-\int^t_uR_{-1+1}(s)\,ds\right\}
R_{-1+1}(t)\delta t
\end{align*}
This completes the proof. 
\end{proof}

\subsubsection{Perfect Phase Approximation of  Probability Density Function of Escape Times}
\label{chap_approx_pdf}
The PDF for the escape times derived in Theorem \ref{chap_4_pdf_thm} had to differentiate between left and right escapes and are conditioned on the time $u$ of entrance into the well. 
Suppose now that $t$ is the escape time from any well, which does not differentiate between left and right escape. 
Note that $t$ is the actual time it takes to escape from a well and is not a time coordinate. 
The PDF for $t$ is given by 
\begin{align*}
p_{tot}(t)=
\frac{1}{2}
\int_{0}^{T}p_-(t+u,u)m_-(u)+p_+(t+u,u)m_+(u)\,du
\end{align*}
This is because  after a long time has elapsed
we would expect that many transitions would have occurred between left and right. 
The number of transitions escaping from the left and right should be roughly the same. 
The $m_-(u)$ is a PDF for the time of entrance into the left well and 
the $m_+(u)$ is a PDF for the time of entrance into the right well. 
We may not have explicit expressions for $m_-(u)$ and $m_+(u)$. 
We derive an approximate expression for $p_{tot}$ without an explicit expressions for $m_-(u)$ and $m_+(u)$. 
Let  $m_-(u)$ and $m_+(u)$ be approximated by 
\begin{align*}
m_-(u)\approx\delta\left(u-T/2\right)
\quad \text{where} \quad 
m_+(u)\approx
\frac{1}{2}\delta\left(u\right)
+
\frac{1}{2}\delta\left(u-T\right)
\end{align*}
where $\delta(\cdot)$ is the Dirac delta function. 
This approximation is used because in the SDEs which we will simulate, 
the times when transition into the left well is greatest is at half the period $u=\frac{T}{2}$
and the times when transition into the right well is greatest is at $u=0$ and $u=T$. 
Due to the fact that $m_-(u)$ and $m_+(u)$ are probabilities a factor of $\frac{1}{2}$ is used in $m_+(u)$. 
Progressing we have 
\begin{align*}
p_{tot}(t)&=
\frac{1}{2}
\int_{0}^{T}p_-(t+u,u)m_-(u)+p_+(t+u,u)m_+(u)\,du\\
&\approx
\frac{1}{2}
\int_{0}^{T}
p_-(t+u,u)\delta\left(u-T/2\right)
+p_+(t+u,u)
\left(
\frac{1}{2}\delta\left(u\right)
+
\frac{1}{2}\delta\left(u-T\right)
\right)
\,du\\[0.5em]
&=
\frac{1}{2}
\left\{
p_-(t+T/2,T/2)
+\frac{1}{2}
p_+(t,0)
+\frac{1}{2}
p_+(t+T,T)
\right\}\\[0.5em]
&=
\frac{1}{2}
\left\{
p_-(t+T/2,T/2)
+\frac{1}{2}
p_+(t,0)
+\frac{1}{2}
p_+(t+0,0)
\right\}\\[0.5em]
&=
\frac{1}{2}
\left\{
p_-(t+T/2,T/2)
+
p_+(t,0)
\right\}\\[0.5em]
&=p_+(t,0)
\end{align*}
This is because for the simulations which we are going to do, the Kramers rate satisfy $R_{-1+1}(t)=R_{+1-1}(t+T/2)$ (see later in Section \ref{chap_mexican_hat_toy_model} and \ref{sum_chap_results} for the geometry of the Mexican Hat Toy Model which justifies this). 
Thus the following approximation 
\begin{align*}
p_{tot}\approx p_+(t,0)
\end{align*}
is only valid for the simulations we do, and not for a general potential. 
We call this way of approximating $m_-(u)$ and $m_+(u)$ the perfect phase approximation.

\subsection{Six Measures of Stochastic Resonance}

\label{chap_analysis_theory}
\label{sum_chap_analysis_theory}

We introduce six possible criteria of measuring how close a process is to exhibiting stochastic resonance.
We call them the six measures denoted by $M_1$, $M_2$, $M_3$, $M_4$, $M_5$ and $M_6$.  
In what follows we will consider so large times,  that the relaxation time has  effectively elapsed for both the diffusion $X^\epsilon_t$  and Markov Chain $Y^\epsilon_t$, in other words
the state probability would have effectively converged to the invariant measure $\overline{\nu}$.
This means that over one period $T=2\pi/\Omega$ of the forcing, the invariant measures will have the properties
\begin{align*}
\overline{\nu}_\pm(t)=\overline{\nu}_\pm(t+T)
\quad \text{and} \quad 
\overline{\nu}_\pm(t)=\overline{\nu}_\mp(t+T/2)
\end{align*}
We obtain the averaged trajectories given by 
\begin{align*}
\left\langle X^\epsilon_t \right\rangle
=E\left(X^\epsilon_t\right)
\quad \text{and} \quad
\left\langle Y^\epsilon_t \right\rangle
=E\left(Y^\epsilon_t\right)
\end{align*}
which are the trajectories obtained after averaging over many realisations. 
Note that $\left\langle X^\epsilon_t \right\rangle$ is calculated 
by averaging over many realisation over many periods. 
The $\left\langle X^\epsilon_t \right\rangle$ is cyclic over one period. 
Notice that $\left\langle Y^\epsilon_t \right\rangle$ is related to the invariant measures by 
\begin{align*}
\left\langle Y^\epsilon_t \right\rangle=\overline{\nu}_+(t)-\overline{\nu}_-(t)
\end{align*}
We introduce the Out-of-Phase Markov Chain defined by 
\begin{align*}
\overline{Y}^\epsilon_t=
\left\{
\begin{array}{l}
0 \quad \text{if} \quad Y^\epsilon_t=-1 \quad \text{and} \quad mod(t,T)\leq T/2\\
1 \quad \text{if} \quad Y^\epsilon_t=-1 \quad \text{and} \quad mod(t,T)> T/2\\
1 \quad \text{if} \quad Y^\epsilon_t=+1 \quad \text{and} \quad mod(t,T)\leq T/2\\
0 \quad \text{if} \quad Y^\epsilon_t=+1 \quad \text{and} \quad mod(t,T)> T/2
\end{array}
\right.
\end{align*}
and similarly the averaged Out-of-Phase Markov Chain is defined by 
\begin{align*}
\left\langle \overline{Y}^\epsilon_t \right\rangle=E\left(\overline{Y}^\epsilon_t\right)
\end{align*}
Define two new functions by 
\begin{align*}
\phi^-(t)=
\left\{
\begin{array}{c}
1 \quad \text{if} \quad mod(t,T)\leq T/2\\
0 \quad \text{if} \quad mod(t,T)> T/2
\end{array}
\right.
\quad \text{and} \quad 
\phi^+(t)=
\left\{
\begin{array}{c}
0 \quad \text{if} \quad mod(t,T)\leq T/2\\
1 \quad \text{if} \quad mod(t,T)> T/2
\end{array}
\right.
\end{align*}
The following trajectories are Fourier transformed
\begin{align*}
\tilde{x}(\omega)&=\mathcal{F}\left(
\langle x_t  \rangle
\right)
=
\langle \mathcal{F} \left(x_t\right) \rangle
\\[0.5em]
\tilde{Y}(\omega)&=\mathcal{F}\left(
\left\langle Y^\epsilon_t \right\rangle
\right)
=
\langle \mathcal{F} \left(x_t\right) \rangle
\end{align*}
The linear response is defined as the intensity of the Fourier Transform at the driving frequency $\Omega$. 
\begin{align*}
X_{lin}=\left|\tilde{x}\left(\frac{\Omega}{2\pi}\right)\right|
\quad \text{and} \quad 
Y_{lin}=\left|\tilde{Y}\left(\frac{\Omega}{2\pi}\right)\right|
\end{align*}
Note that the forcing is periodic and monochromatic. 
Now we can define the six measures.
For the diffusion case only $M_1$ and $M_2$ are defined
\begin{align*}
M_1=\frac{1}{F}X_{lin}
\quad \text{and} \quad 
M_2=\frac{1}{\epsilon F}X_{lin}
\end{align*}
where $F$ is the magnitude of the forcing. 
For the Markov Chain $M_1$, $M_2$, $M_3$, $M_4$, $M_5$ and $M_6$ are all defined as 
\begin{align*}
M_1&=\frac{1}{F}Y_{lin}\\
M_2&=\frac{1}{\epsilon F}Y_{lin}\\
M_3&=\int_0^T \left\langle Y^\epsilon_t \right\rangle^2 dt\\
M_4&=\int_0^T \left\langle \overline{Y}^\epsilon_t \right\rangle dt\\
M_5&=\int_0^T
\phi^-(t)\ln\left(\frac{\phi^-(t)}{\overline{\nu}_-(t)}\right)+
\phi^+(t)\ln\left(\frac{\phi^+(t)}{\overline{\nu}_+(t)}\right)
dt\\
M_6&=\int^T_0
-\overline{\nu}_-(t)\ln\overline{\nu}_-(t)
-\overline{\nu}_+(t)\ln\overline{\nu}_+(t)\,
dt
\end{align*}
Note that in definition of the six measures it is assumed that the process has relaxed to equilibrium. 
We give a few physical interpretation of the six measures 
$M_1$, $M_2$, $M_3$, $M_4$, $M_5$ and $M_6$. 
The $M_1$ is the intensity of the driving frequency $\Omega$ in the spectrum of the Fourier transform. 
The $M_2$ is sometimes called signal-to-noise ratio as it compares this intensity to the noise level $\epsilon$. 
The $M_3$ is sometimes called the energy. 
The $M_4$ is sometimes called the out-of-phase measure since it measures the amount of time the Markov Chain spends in the ``wrong'' well. 
The $M_5$ and $M_6$ are sometimes called relative entropy and entropy respectively, since they measure how far away the invariant measures are from being constant. 
If the invariant measures are constant then these six measures will also be constant. 
Thus it can be understood that these six measures is a measure of how far away the invariant measures are from being constant.

\subsection{Escape Time Statistics}
We will measure the escape time for many consecutive transitions. 
This will result in a collection of measurements of escape times
\begin{align*}
\tau_1,\tau_2,\ldots,\tau_n
\end{align*}
A new method for analysing such a collection of measurements is presented. 
One may be led to think that in the case of synchronized saddle no stochastic resonance is possible. 
To study this in detail we need to have a more careful look at the escape time statistics.  
The problem we are facing is that the distribution of the escape times strictly speaking depends on the entrance time phase $u$. As we have no theoretical result for the distribution of $u$, we are developing here a technique to study the distribution of the escape conditioned on the entrance phase $u$, that is $p(t ,u)$.

\subsubsection{Kolmogorov-Smirnov Test}
First we recall well known results about the Kolmogorov-Smirnov statistic and the Kolmogorov-Smirnov test \cite{ks_test_1}. 
Let
$
\xi_1, \xi_2, \ldots, \xi_n
$
be $n$ independently and identically distributed real random variables,
where each $\xi_i$ is distributed with  CDF $F(\cdot)$. 
The empirical CDF $F_n(\cdot)$ is defined. This gives
\begin{align*}
P(\xi_i\leq x)=F(x)
\quad \text{and} \quad
F_n(x)=
\frac{1}{n}
\sum_{i=1}^n
\mathbf{1}_{(-\infty,x]}
(\xi_i)
\end{align*}
where $\mathbf{1}_{A}$ is the indicator function for a set $A$. 
Consider the following
\begin{align*}
D_n=
\left\Vert
F_n-F
\right\Vert_\infty
=\sup_{x\in\mathbb{R}}
\left|
\frac{1}{n}
\sum_{i=1}^n
\mathbf{1}_{(-\infty,x]}
(\xi_i)
-F(x)
\right|
\end{align*}
where $D_n$ is called the Kolmogorov-Smirnov statistic or KS statistic. 
We define what we mean by the null hypothesis. 

\begin{Definition}
Let $\xi_1,\xi_2,\ldots,\xi_n$ be $n$ real random variables.
The null hypothesis is 
that each $\xi_i$ is independently distributed with CDF $F(x)$.
\end{Definition}

\noindent We want to know how large or small $D_n$ needs to be before deciding whether to reject the null hypothesis. 
The following Theorem offers a remarkable answer to this problem.

\begin{Theorem}\label{chap_4_thm_ks_test}
Suppose the null hypothesis is true,
then the distribution of $D_n$ depends only on $n$. 
\end{Theorem}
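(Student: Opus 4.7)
The plan is to invoke the probability integral transform, which is the standard trick for showing the KS statistic is distribution-free. Assuming $F$ is continuous (which is the relevant case for escape-time distributions), the random variables $U_i := F(\xi_i)$ are i.i.d. Uniform$[0,1]$, and this is the lever that removes the dependence on $F$.

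First I would set $U_i := F(\xi_i)$ and verify that under the null hypothesis, $U_1,\ldots,U_n$ are i.i.d.\ Uniform$[0,1]$. Then I would rewrite the indicator inside the empirical CDF using the monotonicity of $F$: for any $x \in \mathbb{R}$, the event $\{\xi_i \leq x\}$ coincides (up to probability zero) with $\{U_i \leq F(x)\}$. Hence
\begin{align*}
D_n = \sup_{x \in \mathbb{R}} \left| \frac{1}{n} \sum_{i=1}^n \mathbf{1}_{(-\infty, F(x)]}(U_i) - F(x) \right|.
\end{align*}
Next, I would perform the substitution $u = F(x)$. Since $F$ is continuous and non-decreasing with range dense in $[0,1]$, the set $\{F(x) : x \in \mathbb{R}\}$ is dense in $[0,1]$, so by the continuity in $u$ of each of the two terms (the empirical CDF is right-continuous and $u \mapsto u$ is continuous), the supremum over $x \in \mathbb{R}$ equals the supremum over $u \in [0,1]$:
\begin{align*}
D_n = \sup_{u \in [0,1]} \left| \frac{1}{n} \sum_{i=1}^n \mathbf{1}_{(-\infty, u]}(U_i) - u \right|.
\end{align*}
The right-hand side is a functional depending only on the $n$ uniform random variables $U_1, \ldots, U_n$, whose joint law depends only on $n$. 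Hence the distribution of $D_n$ depends only on $n$.

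The mildly delicate step is the passage from $\sup_{x \in \mathbb{R}}$ to $\sup_{u \in [0,1]}$: one must justify that no value of the supremum is lost by restricting to the range of $F$. This uses the density of $F(\mathbb{R})$ in $[0,1]$ together with the right-continuity and monotonicity of the empirical CDF, and is routine for continuous $F$. For non-continuous $F$ the statement can fail in the strict sense (the distribution of $D_n$ becomes stochastically smaller), but this case is not relevant to the escape-time application developed later in the paper, so I would restrict the statement to continuous $F$ and note the caveat.
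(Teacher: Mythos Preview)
Your argument via the probability integral transform is correct and is the standard proof of this classical fact. Note, however, that the paper does not actually supply its own proof of this theorem: it is stated without proof as a recalled well-known result about the Kolmogorov--Smirnov statistic, with a citation to the literature. So there is nothing to compare against; your write-up simply fills in what the paper leaves to the reference, and your caveat about continuity of $F$ is appropriate.
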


\noindent 
This distribution is called the KS distribution.
There is a Theorem which describes the asymptotic behaviour of the KS distribution. 

\begin{Theorem}
In the limit $n \longrightarrow \infty$, $\sqrt{n}D_n$ is asymptotically Kolmogorov distributed with the CDF 
\begin{equation*}
Q(x)=1-2\sum_{k=1}^\infty (-1)^{k-1}e^{-2k^2x^2}
\end{equation*}
that is to say 
\begin{equation*}
\lim_{n\longrightarrow \infty} P(\sqrt{n} D_n \leq x)=Q(x). 
\end{equation*}
\end{Theorem}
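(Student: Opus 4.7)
The plan is to pass through the probability integral transform, then apply a functional central limit theorem, and finally compute the distribution of the supremum of a Brownian bridge explicitly.

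First I would reduce to the uniform case. Assuming for simplicity that $F$ is continuous, the random variables $U_i=F(\xi_i)$ are i.i.d.\ uniform on $[0,1]$, and a direct change of variables shows that if $G_n$ denotes the empirical CDF of $U_1,\dots,U_n$, then
\begin{equation*}
D_n=\sup_{x\in\mathbb{R}}|F_n(x)-F(x)|=\sup_{u\in[0,1]}|G_n(u)-u|.
\end{equation*}
So it suffices to prove the limit for the uniform empirical process $\alpha_n(u)=\sqrt{n}(G_n(u)-u)$.

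Second I would invoke Donsker's theorem: the process $\alpha_n$ converges in distribution, as a random element of the Skorohod space $D[0,1]$, to a standard Brownian bridge $B$ (the Gaussian process on $[0,1]$ with mean zero and covariance $\min(s,t)-st$). The supremum-of-absolute-value functional $\phi:D[0,1]\to\mathbb{R}$, $\phi(f)=\sup_{u\in[0,1]}|f(u)|$, is continuous, and the law of $B$ assigns full mass to its set of continuity points. The continuous mapping theorem therefore yields
\begin{equation*}
\sqrt{n}\,D_n=\sup_{u\in[0,1]}|\alpha_n(u)|\xrightarrow{d}\sup_{u\in[0,1]}|B(u)|=:M.
\end{equation*}

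Third, I would compute $P(M\le x)$ explicitly. Using the representation $B(u)=W(u)-uW(1)$ for a standard Brownian motion $W$, and conditioning on $W(1)=0$, one applies the reflection principle iteratively to count Brownian paths that successively hit the barriers $+x$ and $-x$. Inclusion–exclusion over the number of such barrier crossings produces the alternating series
\begin{equation*}
P(M\le x)=1-2\sum_{k=1}^{\infty}(-1)^{k-1}e^{-2k^{2}x^{2}}=Q(x),
\end{equation*}
which matches the claimed formula. Combining this with the weak convergence above gives $\lim_{n\to\infty}P(\sqrt{n}D_n\le x)=Q(x)$.

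The main obstacle is Donsker's theorem itself: proving convergence in $D[0,1]$ requires both convergence of finite-dimensional distributions (which follows from the multivariate CLT applied to increments of $\alpha_n$) and tightness, the latter typically established via Prokhorov's theorem together with a modulus-of-continuity estimate for the empirical process. The reflection-principle computation of the Brownian bridge supremum is elementary by comparison but requires some care in justifying the termwise interchange of summation and probability in the inclusion–exclusion step.
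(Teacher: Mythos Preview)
Your outline is the standard modern proof of Kolmogorov's theorem: reduction to the uniform case via the probability integral transform, Donsker's invariance principle for the empirical process, the continuous mapping theorem applied to the sup-norm, and finally the reflection-principle computation for the two-sided maximum of a Brownian bridge. Each step is correctly identified and the obstacles you flag (tightness in $D[0,1]$, care in the inclusion--exclusion series) are the right ones.

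As for comparison with the paper: the paper does not prove this statement at all. It is quoted as a classical result, introduced with the sentence ``First we recall well known results about the Kolmogorov-Smirnov statistic and the Kolmogorov-Smirnov test'' and supported by a citation rather than an argument. So there is no in-paper proof to compare against; your proposal simply supplies what the paper chose to omit, and does so along the conventional Donsker--Brownian-bridge route.
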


\subsubsection{Conditional Kolmogorov-Smirnov Test}
This conditional Kolmogorov-Smirnov Test is developed in the paper\cite{tommy_paper_A} and the thesis \cite{tommy_thesis}. 
Let $\zeta_1,\zeta_2,\ldots,\zeta_n$ be $n$ iid real random variables. 
They are $n$ empirical observations of a random variable $\zeta$. 
Now suppose that each of the $\xi_1,\xi_2,\ldots,\xi_n$ is conditioned and dependent on the corresponding $\zeta_1,\zeta_2,\ldots,\zeta_n$.
The conditional CDF $F(\cdot,\cdot)$ is
\begin{align*}
P(\xi_i\leq x\,|\,\zeta_i)=F_{\zeta_i}(x)=\int^x_{-\infty}f(s,\zeta_i)\,ds.
\end{align*}
But $\xi_1,\xi_2,\ldots,\xi_n$ are empirical measurements of the same random variable $\xi$. 
The CDF for $\xi$ is 
\begin{align*}
P(\xi\leq x)=F(x)=\int^x_{-\infty}\int^{u=+\infty}_{u=-\infty}f(s,u)m(u)\,du\,ds
\end{align*}
where $m(\cdot)$ is the PDF for $\zeta$.
In our context we have the problem that the random variables are not identically distributed under the null hypothesis. 
The  $\xi_1,\xi_2,\ldots,\xi_n$ and $\zeta_1,\zeta_2,\ldots,\zeta_n$ are obtained experimentally and $F_{\zeta_i}(\xi_i)$ can be  calculated but a PDF for $\zeta_i$, that is $m(\cdot)$, has no easy expression. 
We still want to perform a statistics test that is similar to the KS test even in such situations where the distribution $m(\cdot)$ of $\zeta$ is unknown. 
First we define what we call the total null hypothesis and the conditional null hypothesis. 

\begin{Definition}
Let $\xi_1, \xi_2, \ldots, \xi_n$ be $n$ empirical observations of a random variable $\xi$. 
The total null hypothesis is that
$\xi$ is distributed with the CDF $F(\cdot)$.
The conditional null is that
 each $\xi_i$ is distributed with the conditional CDF $F_{\zeta_i}(\cdot)$.
\end{Definition}

\noindent A new statistical test is developed, which is similar to the KS test. 

\begin{Theorem}
Suppose the conditional null hypothesis is true. 
Let $F_{\zeta_i}(\cdot)$ be continuous. 
Let $S_n$ be the statistic given by 
\begin{align*}
S_n=\sup_{x\in[0,1]}
\left|
\frac{1}{n}
\sum^n_{i=1}
\mathbf{1}_{[0,x]}
\left(
F_{\zeta_i}(\xi_i)
\right)
-x
\right|
\end{align*}
then $S_n$ is KS distributed. 
\end{Theorem}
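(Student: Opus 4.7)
The plan is to reduce the conditional statistic $S_n$ to the ordinary Kolmogorov--Smirnov statistic evaluated on a sample of iid uniform random variables, and then invoke Theorem \ref{chap_4_thm_ks_test}. Define
$$
U_i \;:=\; F_{\zeta_i}(\xi_i), \qquad i=1,\ldots,n.
$$
The entire proof rests on showing that $U_1,\ldots,U_n$ are iid Uniform$[0,1]$ under the conditional null hypothesis, because once this is established the statistic is
$$
S_n \;=\; \sup_{x\in[0,1]}\left|\,\frac{1}{n}\sum_{i=1}^n \mathbf{1}_{[0,x]}(U_i) - x\,\right|,
$$
which is precisely the classical KS statistic for the $U_i$'s compared against the Uniform$[0,1]$ CDF $G(x)=x$; Theorem \ref{chap_4_thm_ks_test} then yields that the law of $S_n$ depends only on $n$, i.e.\ $S_n$ is KS distributed.

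First I would verify the probability-integral-transform step conditionally. Fix a realisation $\zeta_i = z$. Under the conditional null hypothesis, the conditional law of $\xi_i$ given $\zeta_i=z$ has CDF $F_z$, which is continuous by assumption. A standard argument using the generalized right-inverse shows that for any $u\in[0,1]$,
$$
P\bigl(F_z(\xi_i)\le u \,\big|\, \zeta_i = z\bigr) \;=\; F_z\bigl(F_z^{-1}(u)\bigr) \;=\; u,
$$
where continuity of $F_z$ is exactly what ensures equality in the second step (no atoms are created at the jump set of the inverse). Hence $U_i \mid \zeta_i \sim$ Uniform$[0,1]$, and by the tower property $P(U_i\le u)=E[P(U_i\le u \mid \zeta_i)]=u$, so marginally $U_i\sim$ Uniform$[0,1]$.

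Next I would establish independence of the $U_i$'s. This is where the framework needs to be read carefully: the pairs $(\xi_i,\zeta_i)$ are drawn independently across $i$ (the $\zeta_j$'s are iid, and each $\xi_i$ is conditioned only on the corresponding $\zeta_i$). Since $U_i$ is a measurable function of the pair $(\xi_i,\zeta_i)$, mutual independence of the pairs transfers to mutual independence of the $U_i$'s. Combined with the marginal computation, $U_1,\ldots,U_n$ are iid Uniform$[0,1]$, and rewriting $S_n$ as above finishes the argument.

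The main obstacle I anticipate is not the computation itself but the delicate handling of the two structural assumptions: (a) the joint independence of the pairs $(\xi_i,\zeta_i)$ across $i$ must be explicit in the hypothesis set-up, since without it the $U_i$'s need not be independent even though they are individually uniform; and (b) the probability integral transform must be invoked with the correct generalized inverse, because when $F_z$ has flat regions continuity alone gives uniformity but care is needed to avoid accidentally introducing atoms on the boundary. Beyond these subtleties, the reduction to Theorem \ref{chap_4_thm_ks_test} is immediate since the empirical process $x\mapsto n^{-1}\sum_i \mathbf{1}_{[0,x]}(U_i)$ is precisely the empirical CDF of an iid Uniform$[0,1]$ sample, and its sup-distance to the identity is the KS statistic in canonical form.
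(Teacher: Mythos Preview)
Your argument is correct and is the canonical route: apply the probability integral transform conditionally to get $U_i=F_{\zeta_i}(\xi_i)$ iid Uniform$[0,1]$, then recognise $S_n$ as the classical KS statistic against the identity CDF and invoke Theorem~\ref{chap_4_thm_ks_test}. The paper does not actually supply an in-text proof of this theorem; it only refers the reader to \cite{tommy_paper_A} and \cite{tommy_thesis}, where the same reduction is carried out. Your careful treatment of the two structural ingredients---conditional uniformity from continuity of $F_z$, and independence of the $U_i$ inherited from independence of the pairs $(\xi_i,\zeta_i)$ across $i$---is exactly what the argument requires, so nothing is missing.
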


\subsection{Adiabatic Large Deviation}
We have to stress that this paper is built on three approximations, which form the backbone of all the research presented. These are small noise approximation, adiabatic approximation and perfect phase approximation. 

Perfect phase approximation only works for small noise. This is because the noise is so small the particle will only escape when the maximum probability to escape has arrived. When the minimum probability to escape is present it will almost never escape. This is the idea behind the perfect phase approximation. 

Notice one subtlety behind all the theory presented in this Section. 
The derivations involved probabilities of escape $p$ and $q$ and the escape rates $R_{-1+1}$ and $R_{+1-1}$.
But it was assumed that $p$, $q$, $R_{-1+1}$ and $R_{+1-1}$ are accurately known no matter how large or small the noise level $\epsilon$ is and no matter how fast or slow the driving frequency $\Omega$ is. 
But such ideal expressions for $p$, $q$, $R_{-1+1}$ and $R_{+1-1}$ are not known. 

When we come to do the analysis in Section \ref{sum_chap_results}, the $p_{tot}$ is calculated with the approximation $p_{tot}\approx p_+(t,0)$. When the rates $R_{-1+1}$ and $R_{+1-1}$ are needed they are calculated using Kramers formula as though it is escape from a static potential in the small noise limit. 
This means an oscillatory potential is being approximated by a static potential which is the adiabatic approximation.

In the paper
\cite{adiabatic_large_deviation_herrmann2006}
the adiabatic approximation was justified in the small noise, slow forcing limit using time dependent large deviation theory, that is, it was shown asymptotically the escape times are given by the adiabatic approximation. This result is only for the leading term, whether the analogue result holds for the Kramers rate is unknown.

\section{Mexican Hat Toy Model}
\label{sum_chap_mexican_hat_toy_model}
\label{chap_mexican_hat_toy_model}

The main object of consideration of this paper, which is called the Mexican Hat Toy Model, is now introduced. 
Let $a>0$, $b>0$ and $V_0:\mathbb{R}^2\longrightarrow \mathbb{R}$ be a real function from the plane to the line.
The unperturbed potential is defined as
\begin{align*}
V_0(x,y)=\frac{1}{4}r^4-\frac{1}{2}r^2-ax^2+by^2
\quad \text{where} \quad r=\sqrt{x^2+y^2}
\end{align*}
Let $F_x, F_y \in  \mathbb{R}$ be the forcing. The potential with forcing $V_F$ is defined as 
\begin{align*}
V_F(x,y)
&=\frac{1}{4}r^4-\frac{1}{2}r^2-ax^2+by^2+F_xx+F_yy\\
&=\frac{1}{4}r^4-\frac{1}{2}r^2-ax^2+by^2+\mathbf{F}\cdot\mathbf{x}\\
&=V_0+\mathbf{F}\cdot\mathbf{x}
\end{align*}
written more compactly in vector notation.
The forcing will clearly have a magnitude and direction given by 
\begin{align*}
F=\sqrt{F_x^2+F_y^2}
\quad \text{and} \quad 
\phi=\tan^{-1}\left(\frac{F_y}{F_x}\right)
\end{align*}
We will study the critical points which are solutions to the simultaneous equations 
\begin{align}
\frac{\partial V_F}{\partial x}=0
\quad \text{and} \quad 
\frac{\partial V_F}{\partial y}=0
\label{solv_equation_hat}
\end{align}
It is easy to show that when there is no forcing $F=0$, there are five critical points, that is two wells, two saddles and one hill. 
Stochastic resonance is studied when the forcing is small enough such that the topology of the potential does not change significantly. 
This is because if the forcing is too large (beyond criticality) then transitions are almost certain, and there is little point to consider stochastic resonance. 

In our context, what we mean by the topology of the potential not changing significantly is when the forcing $F$ is small enough such that there are still five critical points and none of them have changed their nature. 
We have the following Theorem. 

\begin{Theorem}\label{x_crit_thm}
Let $F_x>0$, $F_y=0$ and $b<\frac{1}{2}$.
Let $F_x$ be bounded by $F_x^{sad}$ and $F_x^{crit}$ where 
\begin{align*}
F_x^{sad}=2(a+b)\sqrt{1-2b}
\quad \text{and} \quad 
F_x^{crit}=\sqrt{\frac{4(1+2a)^3}{27}}
\end{align*}
then $V_f$ has five critical points. 
Let  $F_x=0$, $F_y>0$ and $b<\frac{1}{2}$. 
Let $F_y$ be bounded by $F_y^{sad}$ and $F_y^{crit}$ where 
\begin{align*}
F_y^{sad}=2(a+b)\sqrt{1+2a}
\quad \text{and} \quad 
F_y^{crit}=\sqrt{\frac{4(1-2b)^3}{27}}
\end{align*}
then $V_f$ has five critical points. 
\end{Theorem}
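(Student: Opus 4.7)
The plan is to exploit the factored form of the gradient equations and reduce the simultaneous system to a cubic in one variable plus a linear relation on a circle. Writing out the forced gradient equations, I get
\begin{align*}
\partial_x V_F &= x\bigl(x^2+y^2-(1+2a)\bigr)+F_x=0,\\
\partial_y V_F &= y\bigl(x^2+y^2-(1-2b)\bigr)+F_y=0.
\end{align*}
The key observation is that setting $F_y=0$ makes the second equation factor cleanly as $y\bigl(x^2+y^2-(1-2b)\bigr)=0$, so every critical point lies on one of two curves: the line $y=0$ or the circle $x^2+y^2=1-2b$ (the latter requiring $b<\tfrac12$).

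For the first case of the theorem I would then treat these two branches separately. On $y=0$ the $x$-equation becomes the cubic $x^3-(1+2a)x+F_x=0$. Its discriminant is $4(1+2a)^3-27F_x^2$, which is strictly positive precisely when $F_x<F_x^{\mathrm{crit}}=\sqrt{4(1+2a)^3/27}$, producing three distinct real roots (two wells and a central critical point on the $x$-axis). On the circle branch, substituting $x^2+y^2=1-2b$ into the $x$-equation collapses it to $x\bigl((1-2b)-(1+2a)\bigr)+F_x=0$, i.e. $x=F_x/(2(a+b))$, and then $y^2=(1-2b)-F_x^2/(4(a+b)^2)$ is strictly positive exactly when $F_x<F_x^{\mathrm{sad}}=2(a+b)\sqrt{1-2b}$, giving two distinct critical points symmetric about the $x$-axis. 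Together this yields $3+2=5$ critical points. I would also briefly verify that the two branches cannot share a point below both thresholds: a common point would force $y=0$ together with $x=F_x/(2(a+b))$ and $x^2=1-2b$, which reduces to $F_x=F_x^{\mathrm{sad}}$ and simultaneously makes $x=\sqrt{1-2b}$ a double root of the cubic, contradicting strict inequality.

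For the second half of the theorem ($F_x=0$, $F_y>0$), the symmetric trick applies: now the first equation factors as $x\bigl(x^2+y^2-(1+2a)\bigr)=0$, so critical points lie on $x=0$ or on the circle $x^2+y^2=1+2a$. The branch $x=0$ produces the cubic $y^3-(1-2b)y+F_y=0$ with discriminant threshold $F_y^{\mathrm{crit}}=\sqrt{4(1-2b)^3/27}$, while substitution on the circle gives $y=-F_y/(2(a+b))$ with $x^2=(1+2a)-F_y^2/(4(a+b)^2)$, leading to the bound $F_y^{\mathrm{sad}}=2(a+b)\sqrt{1+2a}$. The computation is formally the same as the first case after interchanging the roles of $a$ and $-b$, so I would simply note this symmetry rather than repeat it.

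The main obstacle is not any single calculation but keeping the bookkeeping of inequalities clean: both thresholds must be in force simultaneously, and one has to confirm that the two branches stay disjoint throughout the admissible range. A minor subtlety to address is that the theorem only asserts the existence of five critical points; preserving their nature (two wells, two saddles, one hill) follows by a continuity argument on the Hessian eigenvalues once one notes that a change of type can only occur at a bifurcation, i.e. at $F_x=F_x^{\mathrm{sad}}$ or $F_x=F_x^{\mathrm{crit}}$, both of which are excluded by hypothesis.
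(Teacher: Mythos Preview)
Your approach is essentially identical to the paper's: factor $\partial_y V_F=0$ into the line $y=0$ and the circle $x^2+y^2=1-2b$, then count roots on each branch via the cubic discriminant and the sign of $y^2$, and argue the second case by symmetry. Your write-up is in fact more detailed than the paper's terse argument (you check disjointness of the branches and comment on preservation of critical-point type); the only slip is the throwaway remark that $x=\sqrt{1-2b}$ becomes a \emph{double} root of the cubic at the threshold, which is neither true in general nor needed, since the equality $F_x=F_x^{\mathrm{sad}}$ already gives the contradiction.
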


\begin{proof}
We prove the case for $F_x>0$, $F_y=0$ and $b<\frac{1}{2}$. The case for $F_x=0$, $F_y>0$ and $b<\frac{1}{2}$ is similar. 
Solving Equation \ref{solv_equation_hat}
leads to 
$
\partial V_F/\partial y
=y(x^2+y^2)-(1-2b)y=0
$
which holds if either $(x^2+y^2)-(1-2b)=0$ or $y=0$. 
The first case is $(x^2+y^2)-(1-2b)=0$ which gives two solutions in $y$.  Having $F_x<F_x^{sad}$ gives two real solutions.
The second case is $y=0$ which would yield a cubic equation with three unknowns. Having $F_x<F_x^{crit}$ gives three real solutions. 
\end{proof}

\noindent The exact properties and behaviour of the critical points are most easily  studied for the cases $F_x\neq0$, $F_0$ and $F_x=0$, $F_y\neq0$. 
For $F_x\neq0$, $F_y\neq0$ a quintic equation with five unknowns is involved.
This means that although an explicit value for the critical forcing when $F_x\neq0$ and $F_y\neq0$ cannot be given analytically, an educated guess can be made 
\begin{align}
F^{crit}=\min\left\{F_x^{sad}, F_x^{crit}, F_y^{sad}, F_y^{crit}\right\}
\label{chap_5_critical_forcing}
\end{align}
that is because a critical force in a general direction must encompass all the other directions. 
More details of the properties of the critical points can be found in the thesis \cite{tommy_thesis}. 
We give an example of how the Mexican Hat Toy Model look like 
\begin{figure}[H]
\centerline{\includegraphics[scale=0.35]{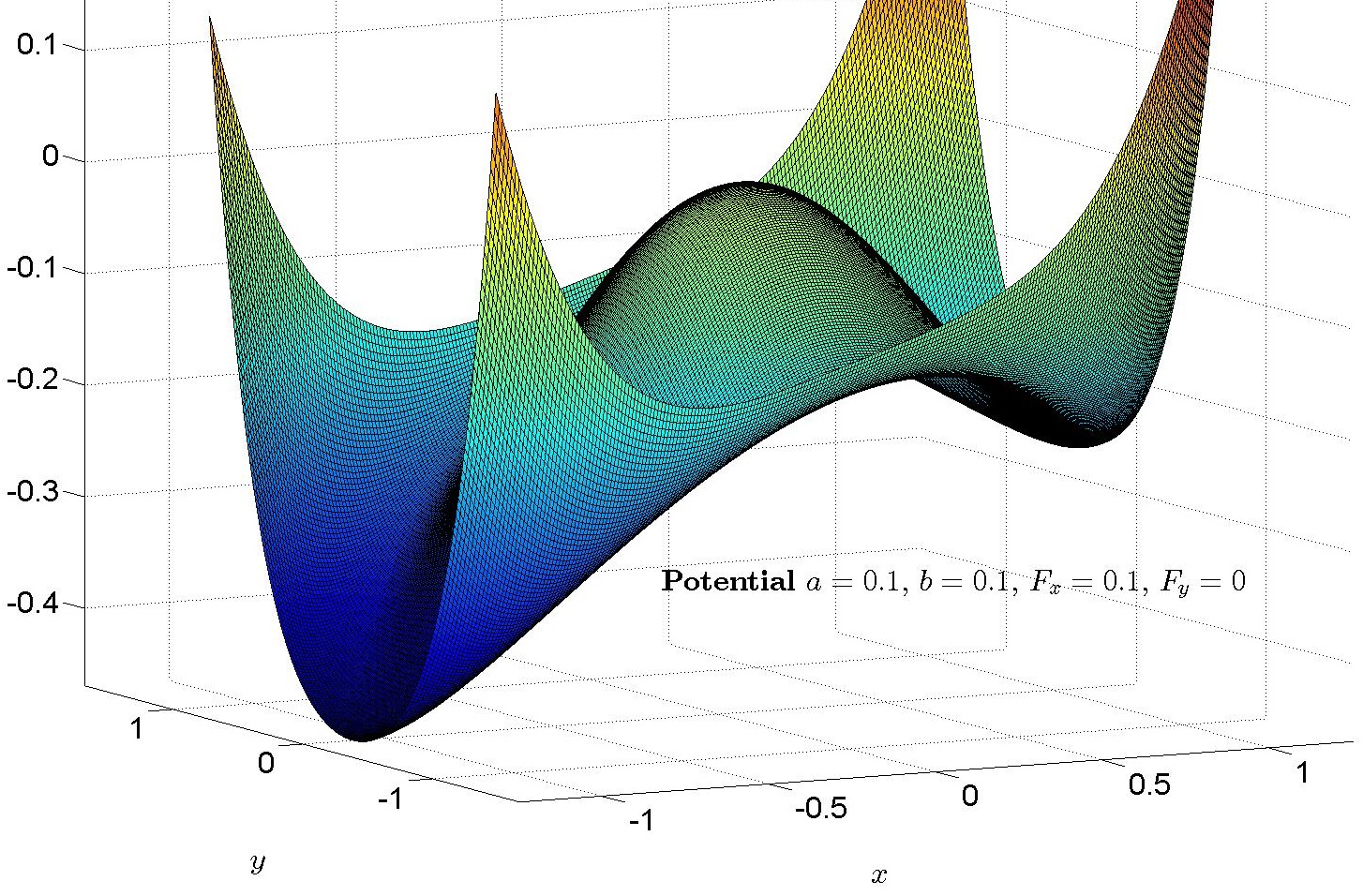}}
\caption{
An example of the potential $V_F(x,y)=\frac{1}{4}r^4-\frac{1}{2}r^2-ax^2+by^2+F_xx+F_yy$ where $r=\sqrt{x^2+y^2}$. 
Here $a=0.1$, $b=0.1$, $F_x=0.1$ and $F_y=0$. 
Notice there are two saddles just ahead of the hill.
The well on the right is higher than the well on the left.
}
\end{figure}

\section{Experimental Method}

\label{sum_chap_results}

We simulate a series of stochastic trajectories for the Mexican Hate Toy Model and analyse them. 
The SDE we want to simulate is 
\begin{align*}
dx&=\left[-\frac{\partial V_0}{\partial x}+F_x\cos \Omega t \ \right]dt+\epsilon \ dw_x\\
dy&=\left[-\frac{\partial V_0}{\partial y}+F_y\cos \Omega t \ \right]dt+\epsilon \ dw_y
\end{align*}
where $V_0:\mathbb{R}^2\longrightarrow\mathbb{R}$ is the unperturbed potential of the Mexican Hat Toy Model, $F_x$ and $F_y$ are the $x$ and $y$ components of the forcing, $\Omega$ is the forcing frequency, $\epsilon$ is the noise level and $w_x$ and $w_y$ are two independent Wiener processes. 
This  SDE can be written alternatively as 
\begin{align*}
dx&=\left[-\frac{\partial V_0}{\partial x}+F\cos\phi\cos \Omega t \ \right]dt+\epsilon \ dw_x\\
dy&=\left[-\frac{\partial V_0}{\partial y}+F\sin\phi\cos \Omega t \ \right]dt+\epsilon \ dw_y
\end{align*}
The Euler method was used to simulate this SDE with the following  parameters being fixed at the following values (see Equation \ref{chap_5_critical_forcing} for $F^{crit}$) 
\begin{align*}
a=0.15 \quad
b=0.1 \quad 
F=0.7F^{crit} \quad 
\Omega=0.001
\end{align*}
The angle of the forcing $\phi$ and the noise level $\epsilon$ were varied. 
The values used were 
\begin{align*}
\epsilon=0.15, 0.16, \ldots, 0.30 
\quad \text{and} \quad 
\phi=0^\circ,75^\circ,78^\circ,81^\circ,84^\circ,87^\circ,90^\circ
\end{align*}
The averaged diffusion trajectories $\langle x_t \rangle$  and $\langle y_t \rangle$ were collected. 
The averaged Markov Chain $\langle Y_t^\epsilon \rangle$ and the averaged Out-of-Phase Markov Chain $\langle \overline{Y}_t^\epsilon \rangle$ were collected as well. 
This would allow for the calculation of the invariant measures $\overline{\nu}_-(\cdot)$ and $\overline{\nu}_+(\cdot)$. 
The time coordinates of the entrance and exit to and from the left and right wells were also collected. 
This would allow for the calculation of the escape times. 
We use the following values for the time step and the radius around the wells.
\begin{align*}
t_{step}=0.014 
\quad \text{and} \quad 
R=0.19
\end{align*}
Note that the period of the forcing is denoted by 
\begin{align*}
T=\frac{2\pi}{\Omega}
\end{align*}
The averaged trajectories were simulated by taking the averaged of 200 realisations. 
Each realisation was 30 periods long, that is a trajectory over the interval $[0,30T]$. 
The initial value of the state probabilities were set at 
\begin{align*}
\nu_-(0)=\nu_+(0)=\frac{1}{2}
\end{align*}
which assists in giving a faster convergence to the invariant measures (see Theorems  \ref{chap_4_thm:equal_contin} and \ref{chap_4_thm:not:equal_contin}). 
We should also stress that a lot of the data and results presented in this Section is just a selection of out a much wider range of results. 
All 112 combinations of the parameters were simulated and analysed. 
Details as to why these range of parameters are chosen for the experiment are given in \cite{tommy_thesis}.

\section{Results}
\label{sum_chap_results}

\subsection{Six Measures Analysis}
\label{conclusion_six_measures}
The six measures are calculated for the diffusion and Markov Chain case for all angles of the forcing $\phi$ and all noise levels $\epsilon$ used in the simulations. 
When $\phi=90^\circ$ the wells were moving up and down but they were always at the same height as each other. 
The distance from either wells to the saddles, which is a gateway for escape, is the same in both wells at all times. 
This means the $\phi=90^\circ$ can be modelled by a synchronised Markov Chain with $p=q$. 
The invariant measures for the $\phi=90^\circ$ case as predicted by Corollary \ref{cor_half_continuous} is $\overline{\nu}_-=\overline{\nu}_+=\frac{1}{2}$, which means
the Fourier Transform of the averaged Markov Chain is predicted to be zero.
This predicts the six measures at $\phi=90^\circ$ to be 
\begin{align*}
M_1=M_2=M_3=0\quad
M_4=\frac{1}{2}T\quad
M_5=M_6=+T\ln(2)
\end{align*}
Note that $\ln(2)=0.6931\approx0.7$. 
Notice that for very low noise level $\epsilon\approx0$ the probabilities of escape from either well is so small it may be approximately modelled by a synchronised Markov Chain with $p\approx q$. 
The results below confirm the predictions for the case of $\phi=90^\circ$. 
Since $M_1$ and $M_2$ differ by a factor, only $M_2$ is shown.

\begin{figure}[H]
\centerline{\includegraphics[scale=0.38]{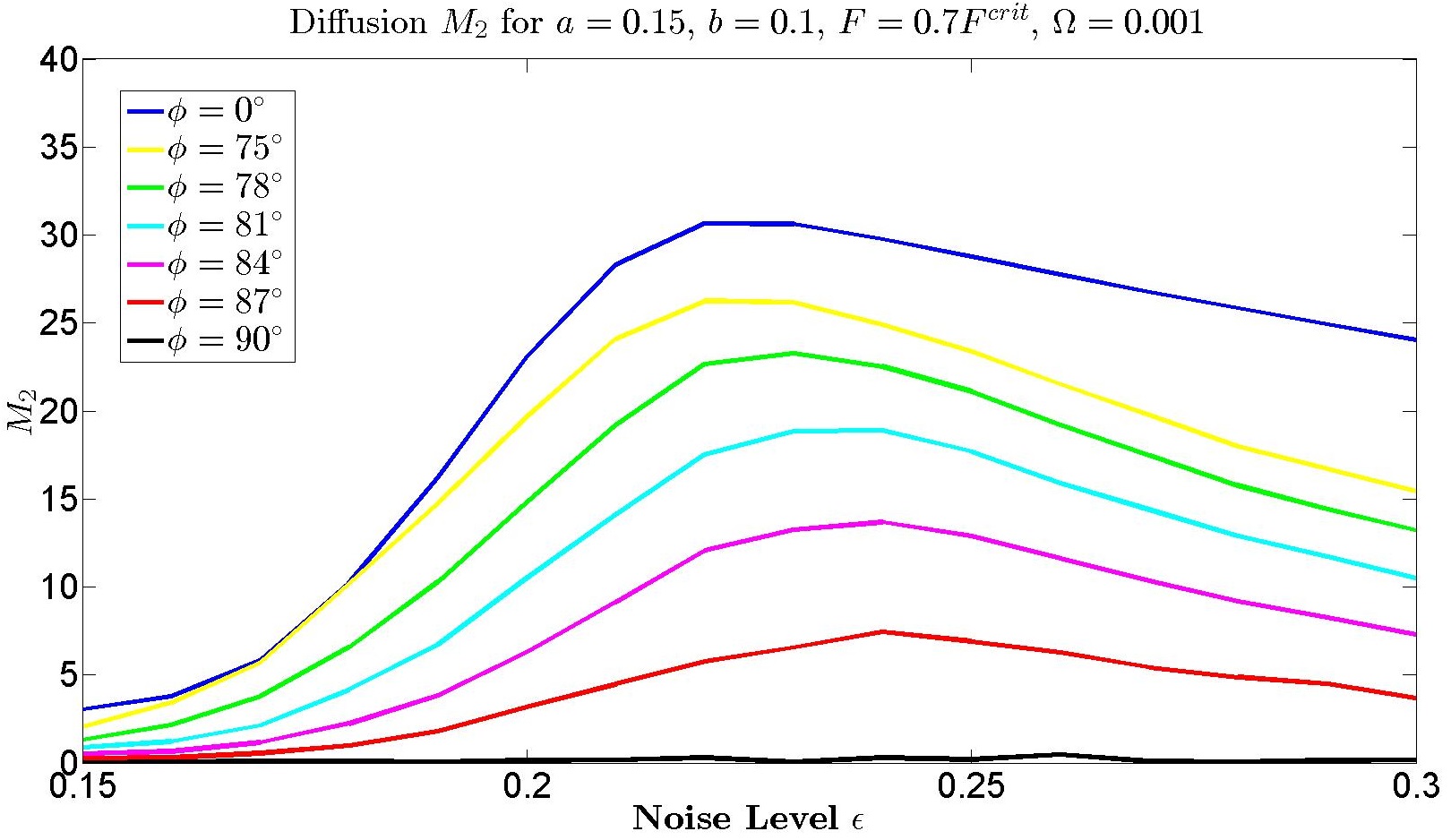}}
\caption{The measure $M_2$ for the diffusion case for various angles and noise levels.}
\label{chap_8_g75_diff_m_2_x_single_measures}
\end{figure}

\begin{figure}[H]
\centerline{\includegraphics[scale=0.38]{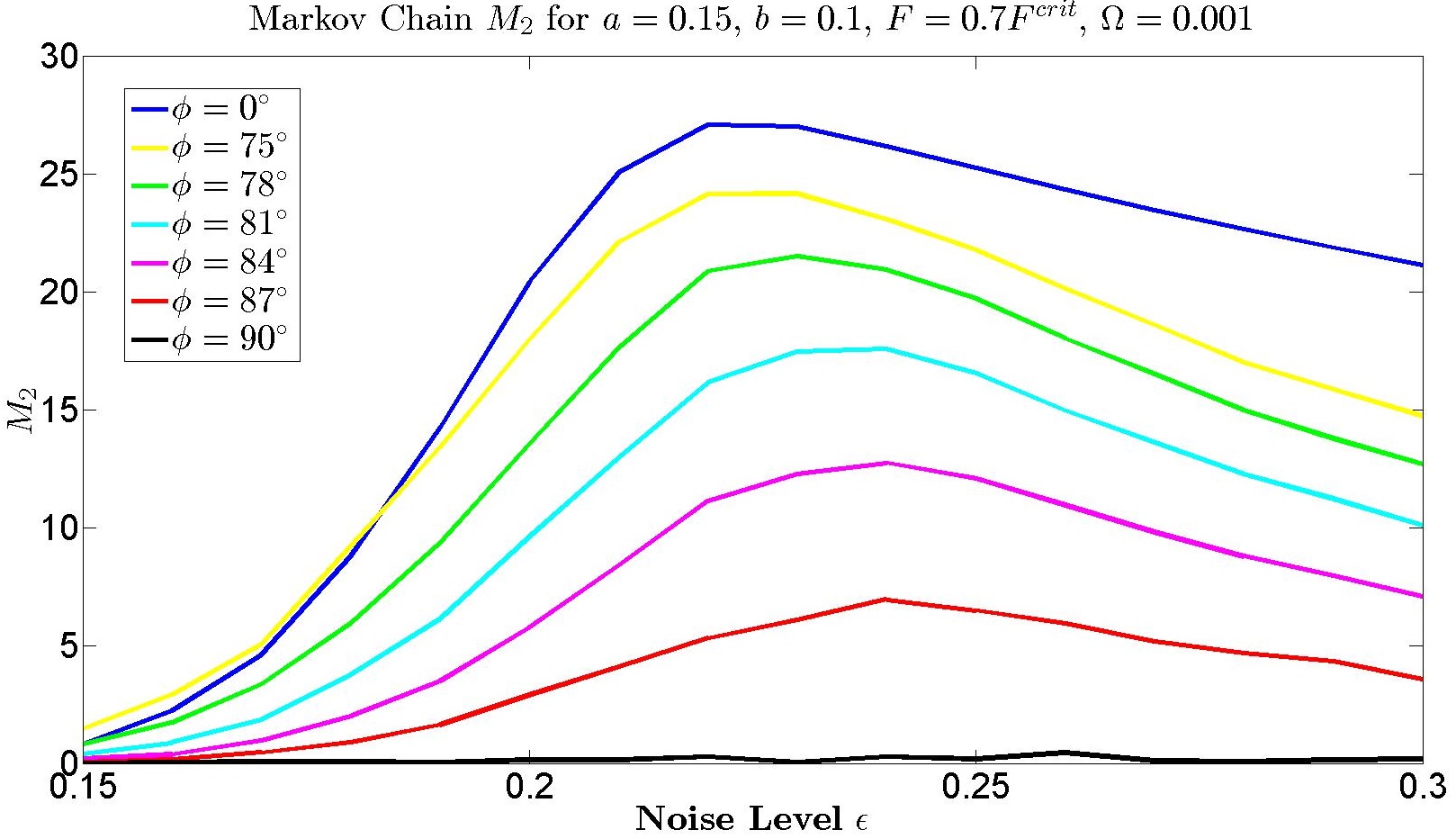}}
\caption{The measure $M_2$ for the Markov Chain for various angles and noise levels.}
\label{chap_8_g75_markov_m_2_single_measures}
\end{figure}

\begin{figure}[H]
\centerline{\includegraphics[scale=0.38]{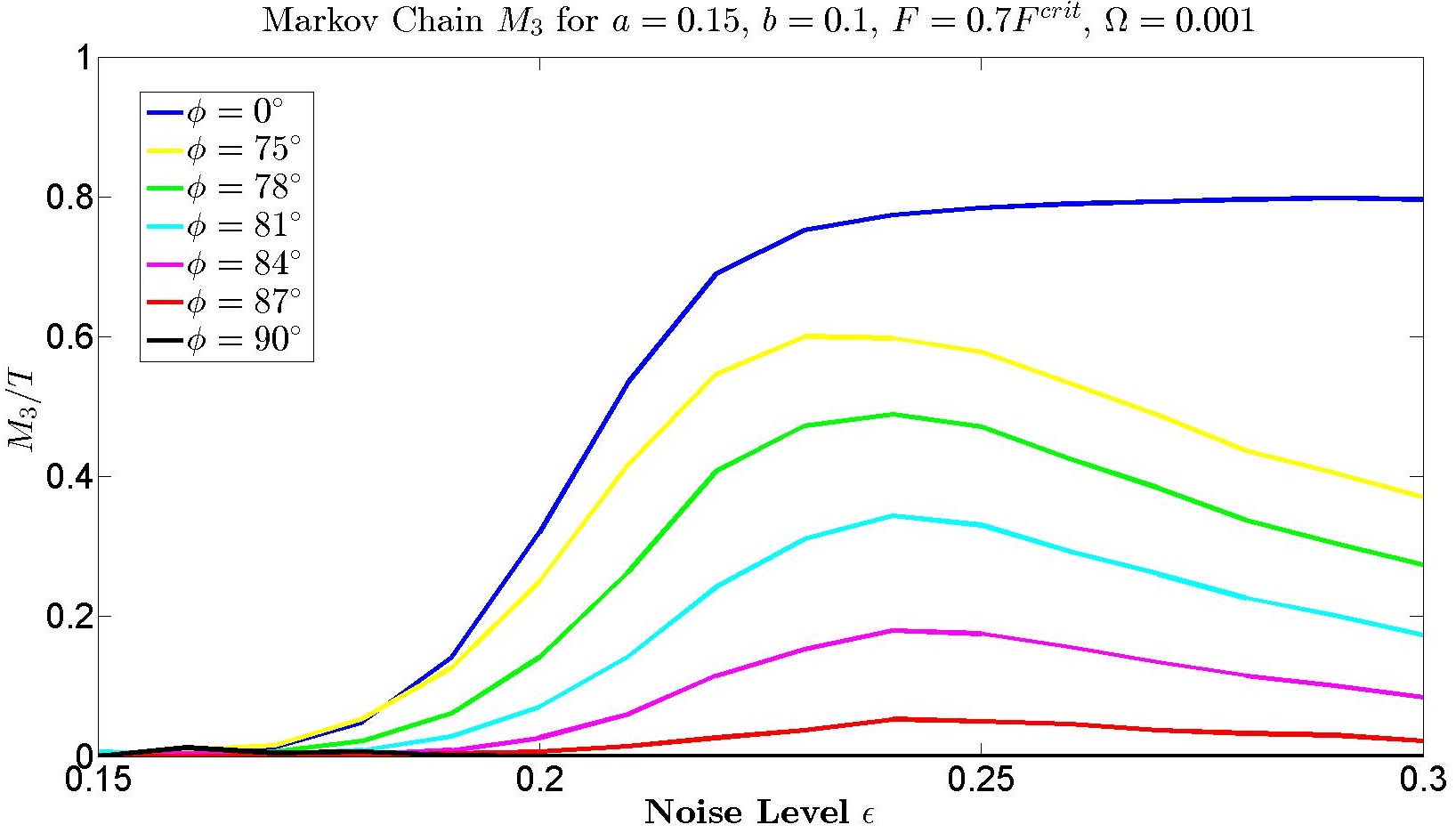}}
\caption{The measure $M_3$ for the Markov Chain for various angles and noise levels.}
\label{chap_8_g75_markov_m_3_measures}
\end{figure}

\begin{figure}[H]
\centerline{\includegraphics[scale=0.38]{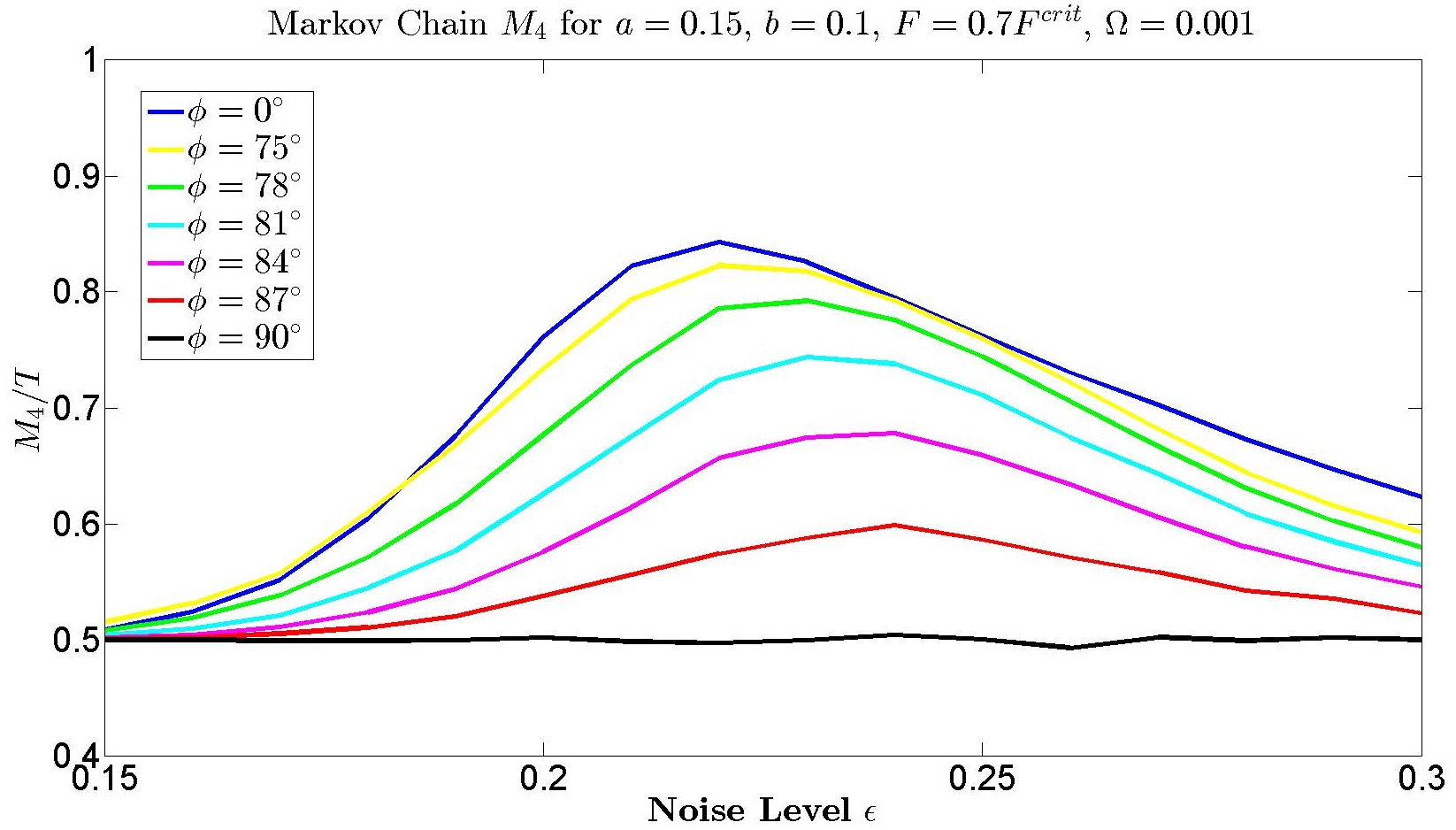}}
\caption{The measure $M_4$ for the Markov Chain for various angles and noise levels.}
\label{chap_8_g75_markov_m_4_measures}
\end{figure}

\begin{figure}[H]
\centerline{\includegraphics[scale=0.38]{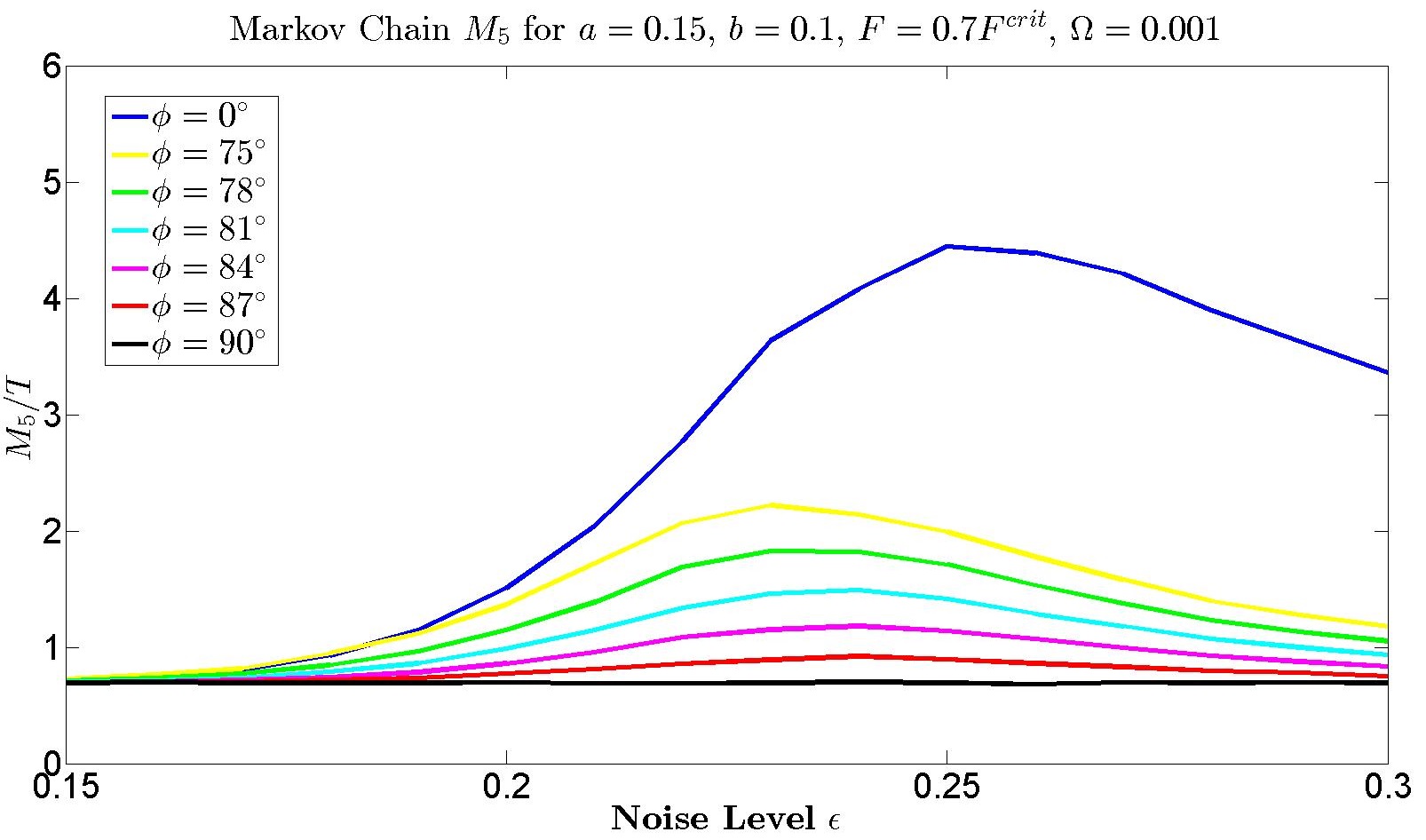}}
\caption{The measure $M_5$ for the Markov Chain for various angles and noise levels.}
\label{chap_8_g79_markov_m_5_measures}
\end{figure}

\begin{figure}[H]
\centerline{\includegraphics[scale=0.38]{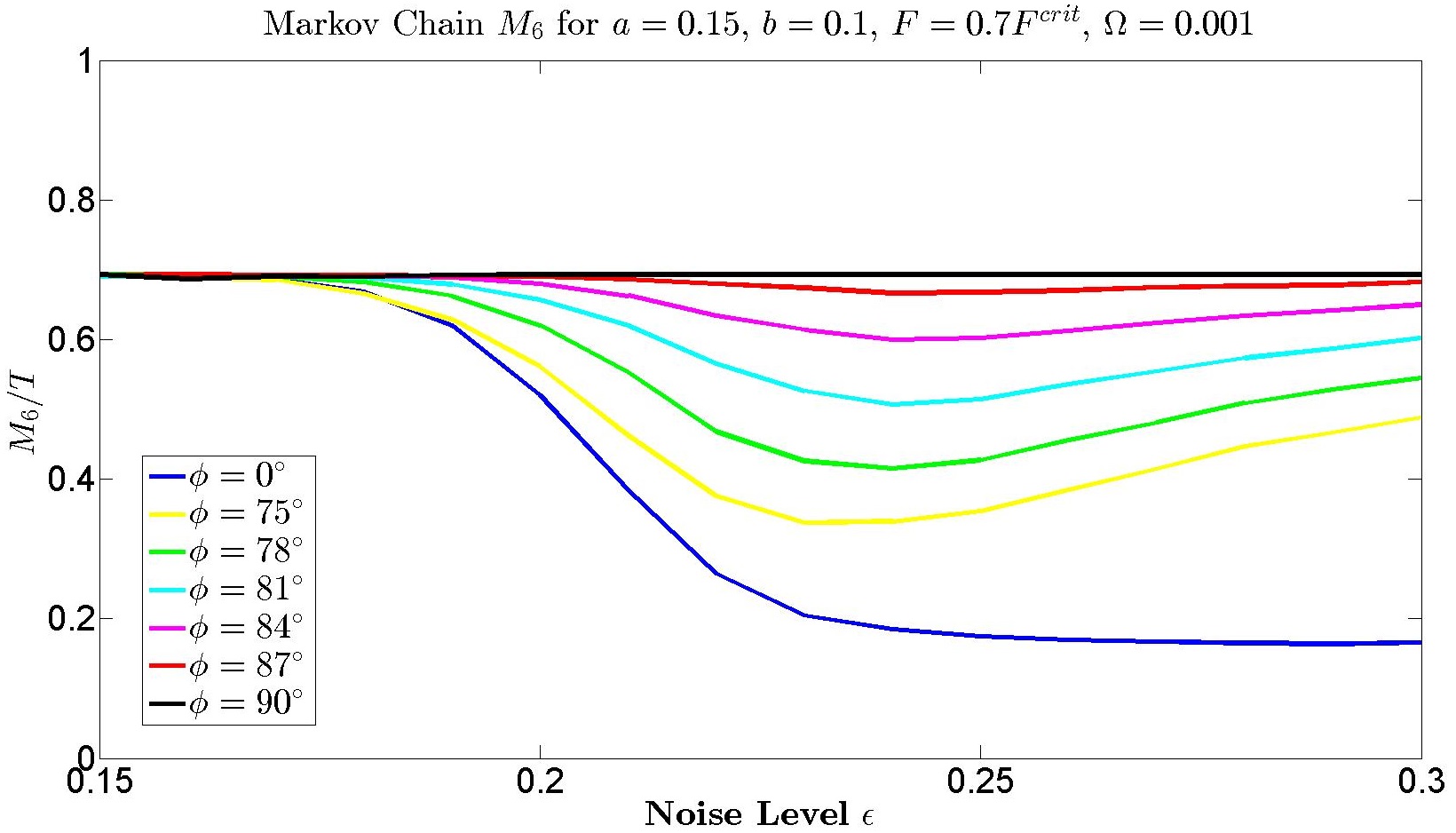}}
\caption{The measure $M_6$ for the Markov Chain for various angles and noise levels.}
\label{chap_8_g75_markov_m_6_measures}
\end{figure}

\subsubsection{Interpretation of the Six Measures Analysis}
The six measures $M_1$, $M_2$, $M_3$, $M_4$, $M_5$ and $M_6$ were plotted as a function of the noise level $\epsilon$. 
The six measures show a regular systematic behaviour in the angle $\phi$. 
The shape of the graphs of the six measures  were very similar for all the angles. 
As the angle  increased from $\phi=0^\circ$ to $\phi=90^\circ$ the six measures  gradually tended to being nearly constant in $\epsilon$. 

This effect can be explained with the invariant measures. 
When $\phi=0^\circ$ the probabilities for escaping from left to right $p_{-1+1}$ was different to to the probabilities for escaping from right to left $p_{+1-1}$.
But in the $\phi=90^\circ$ case they are the same, that is
\begin{align*}
\phi=0^\circ\,\,\, \quad p_{-1+1}&\neq p_{+1-1}\\
\phi=90^\circ \quad p_{-1+1}&= p_{+1-1}
\end{align*}
The is can be understood geometrically. 
For $\phi=0^\circ$ we have $F_x\neq0$ and $F_y=0$. 
The two wells in the Mexican Hat potential move up and down and are alternating with each other. 
When one well is high the other is low. 
For $\phi=90^\circ$ we have $F_x=0$ and $F_y\neq0$.
The two wells are always at the same height as each other and the distance to the saddles (which is a gateway to escape) is also the same in both wells. 

Recall our discussions on the Markov Chain in Section \ref{chapter_oscil_times}. 
The $p$ is related to left to right escape $p_{-1+1}$
and $q$ was related to right to left escape $p_{+1-1}$. 
For $\phi=0^\circ$ the Markov Chain can be modelled with $p\neq q$ and for $\phi=90^\circ$ the Markov Chain can be modelled with $p=q$. 
In the case of $\phi=0^\circ$ the invariant measure was cyclically changing in time. 
In the case of $\phi=90^\circ$ the invariant measure was constant at $\overline{\nu}_-(\cdot)=\overline{\nu}_+(\cdot)=\frac{1}{2}$. 
This explains why the six measures $M_1$, $M_2$, $M_3$, $M_4$, $M_5$ and $M_6$ were nearly constant for angle $\phi=90^\circ$. 
As $\phi$ changed from $\phi=0^\circ$ to $\phi=90^\circ$, the Markov Chain changed from being modelled by $p\neq q $ to being modelled by $p=q$. 
This explains the change in the six measures tending to being constant in $\epsilon$ as $\phi$ was varied. 
The six measures can be thought of as a way of measuring how far away the invariant measures are from being constant. 
If the invariant measures are constant then the six measures will also be constant.\footnote{See Appendix \ref{appendix_six_measures} for how $M_5$ and $M_6$ were numerically calculated. The ideas were not that trivial.}

For fixed $\phi$ near $\phi=90^\circ$ there is no pronounced maximum of any measure for varying $\epsilon$. 
Hence the six measures indicate the absence of a pronounce stochastic resonance near $\phi=90^\circ$. 
But consider the trajectories at a range of angles. 

\begin{figure}[H]
\centerline{\includegraphics[scale=0.35]{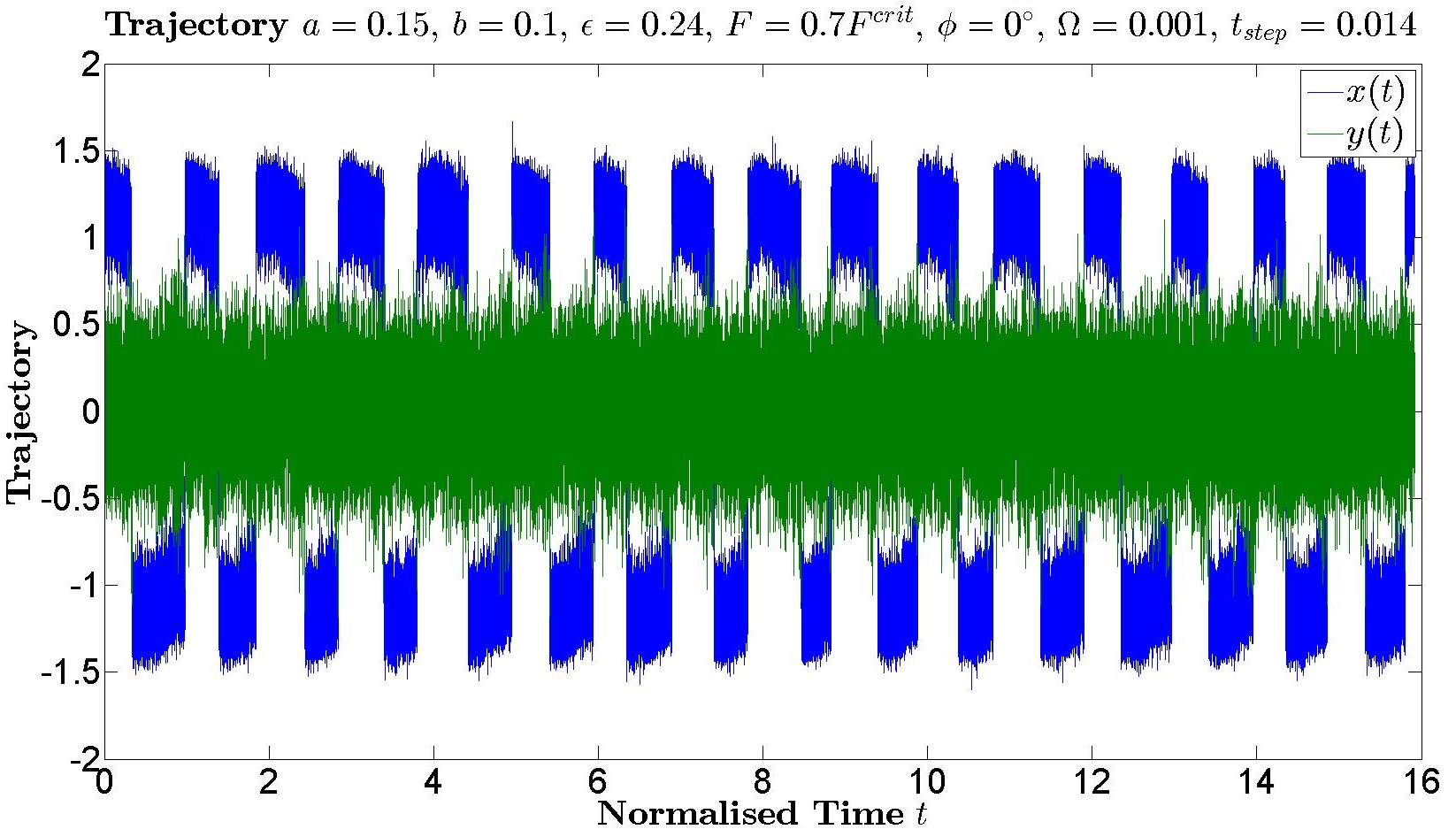}}
\caption{The blue trajectory is $x(t)$ and the green trajectory is $y(t)$.}
\label{chap_8_path_p0}
\end{figure}

\begin{figure}[H]
\centerline{\includegraphics[scale=0.35]{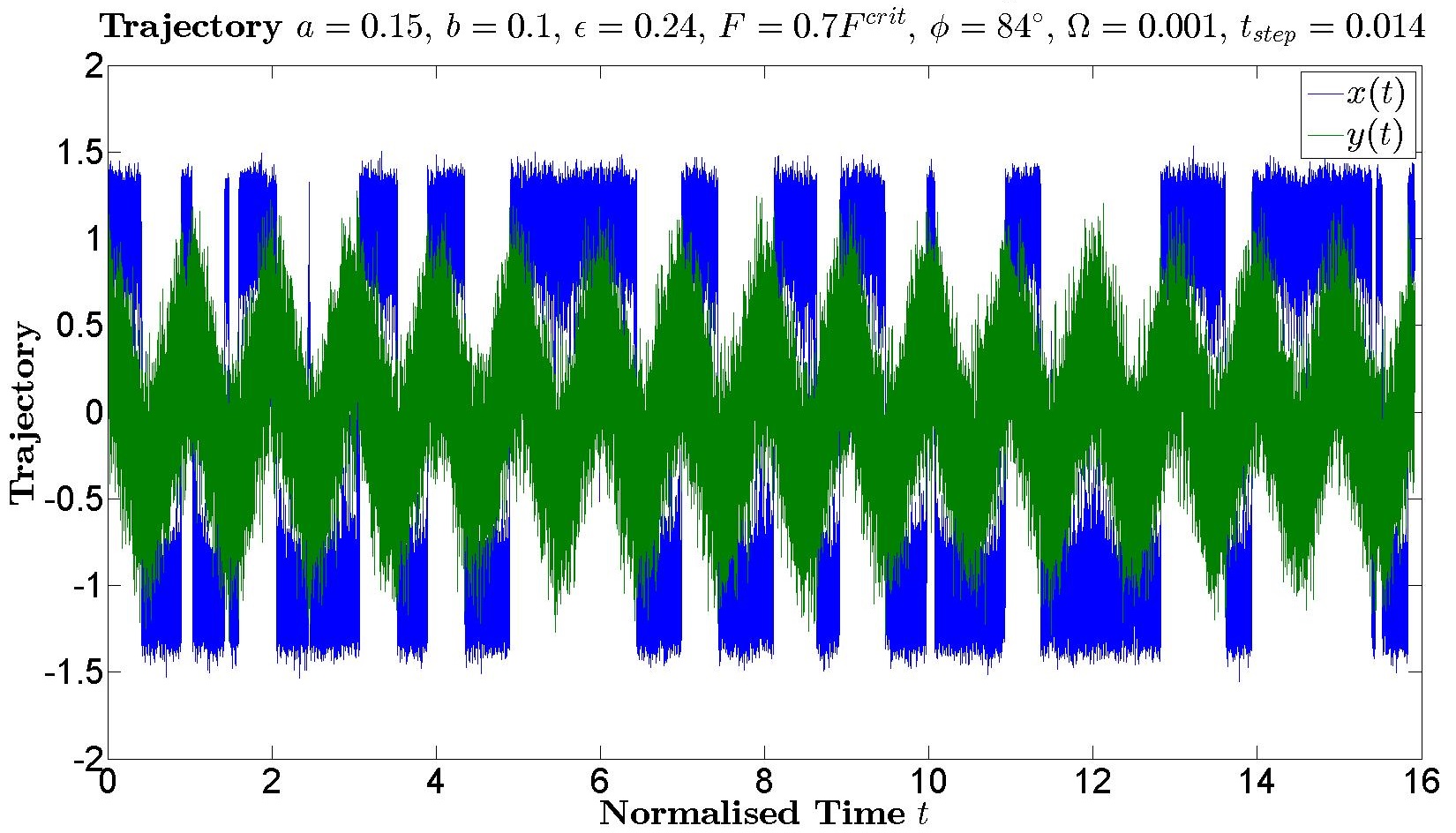}}
\caption{The blue trajectory is $x(t)$ and the green trajectory is $y(t)$.}
\label{chap_8_path_p84}
\end{figure}

\begin{figure}[H]
\centerline{\includegraphics[scale=0.35]{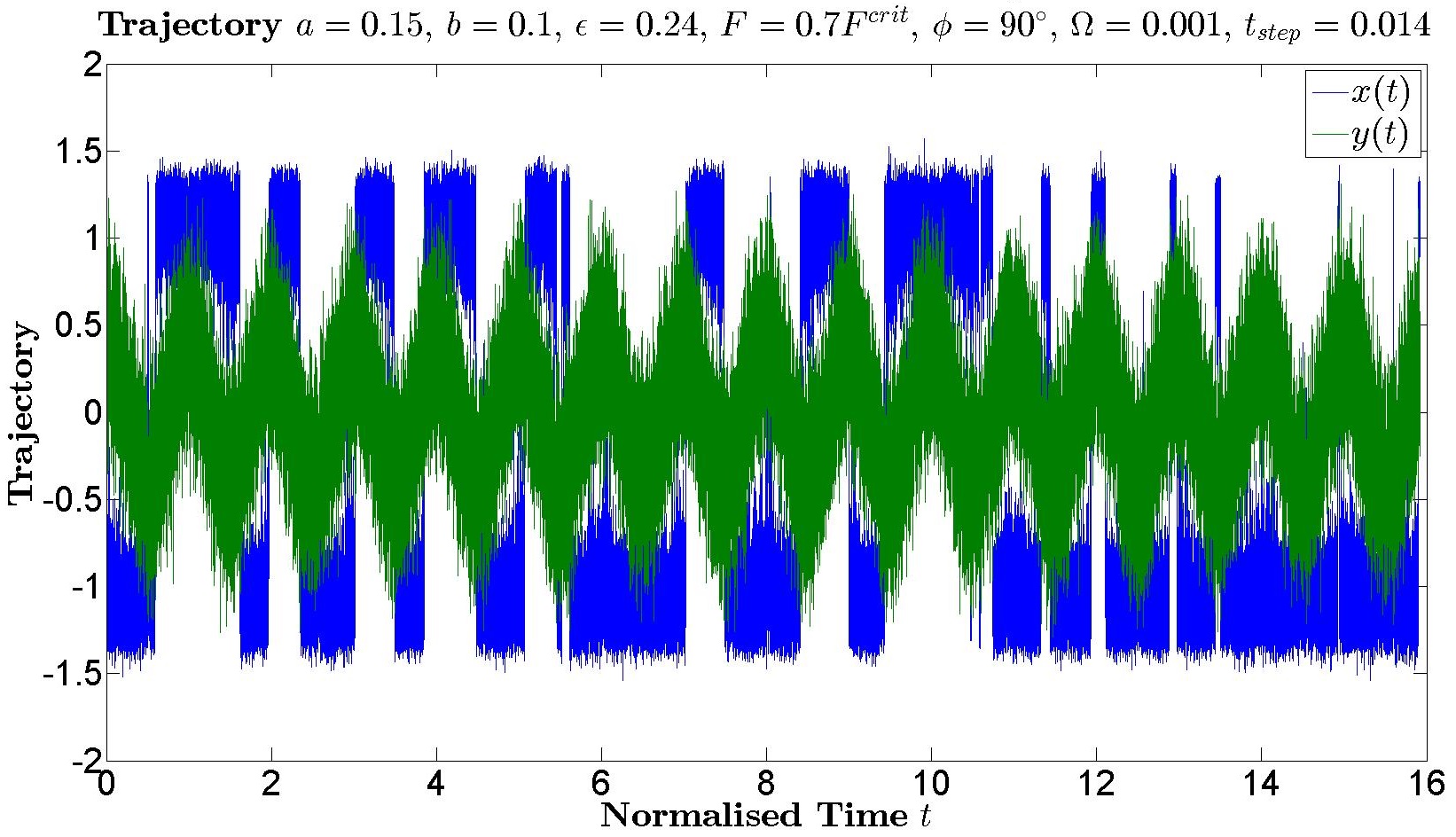}}
\caption{The blue trajectory is $x(t)$ and the green trajectory is $y(t)$.}
\label{chap_8_path_p90}
\end{figure}

\noindent When $\phi=0^\circ$ the $x(t)$ show quasi-deterministic behaviour. 
The transitions are very regular and $y(t)$ fluctuates around zero. 
As the angle varies the transitions become less regular and $y(t)$ starts to oscillate. 
This suggest that there is some regularity in the behaviour of the trajectories but the six measures are not detecting it. 
Further studies with the escape times would tell us more.

\subsection{Escape Time and Conditional KS Test Analysis}
We remind ourselves of the PDF of escape times and the way the conditional KS test can be applied in our context. 
The conditional PDF of the escape times are
\begin{align*}
p_{-}(t,u)&=R_{-1+1}(t)\exp\left\{-\int^t_uR_{-1+1}(s)\,ds\right\}\\[0.5em] 
p_{+}(t,u)&=R_{+1-1}(t)\exp\left\{-\int^t_uR_{+1-1}(s)\,ds\right\}
\end{align*} 
where $R_{-1+1}$ and $R_{+1-1}$ are the Kramers escape rate from left to right and right to left. 
In the case of $p_-(t,u)$, $t$ is the time coordinate of escape from the left well and $u$ is the time coordinate of entrance into the left well. 
In the case of $p_+(t,u)$, $t$ is the time coordinate of escape from the right well and $u$ is the time coordinate of entrance into the right well.
If we do not differentiate between escaping from the left or right then the PDF for an escape time $t$ is 
(note that $t$ here is an escape time as it is and not a time coordinate)
\begin{align*}
p_{tot}(t)=
\frac{1}{2}
\int_{0}^{T}p_-(t+u,u)m_-(u)+p_+(t+u,u)m_+(u)\,du
\end{align*}
where $m_-(\cdot)$ and $m_+(\cdot)$ are PDFs of the time of entrance into the left and right well respectively. 
We do not have  explicit expressions for  $m_-(\cdot)$ and $m_+(\cdot)$.
The  $p_{tot}(t)$ is approximated by
\begin{align*}
p_{tot}(t)\approx p_+(t,0)
\end{align*}
The times it took to escape from both the left or right wells are plotted in  histograms. 
This is an empirical approximation to the PDF $p_{tot}(t)\approx p_+(t,0)$. 
A selection of some of the  results are given below for various angles of the forcing $\phi$ and noise level $\epsilon$. 
They are examples of the Singles, Intermediate and Double Frequencies which we will explain later. 
Note that the escape times are given in units of normalised time, which is in the number of periods $T$. 

\begin{figure}[H]
\centerline{\includegraphics[scale=0.35]{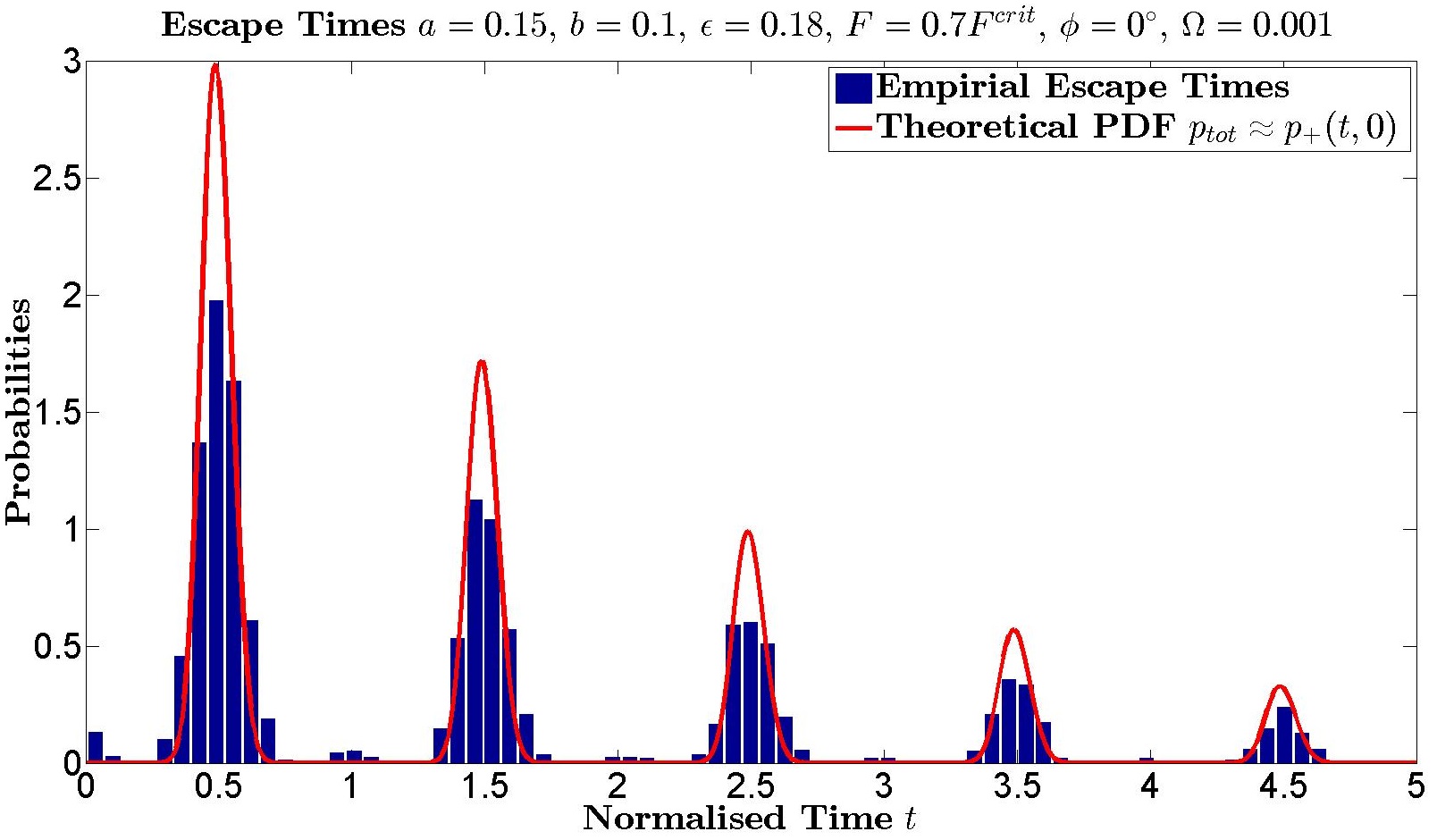}}
\caption{This is an example of the Single Frequency.
}
\label{chap_8_g75_p0_e18_pdf}
\end{figure}

\begin{figure}[H]
\centerline{\includegraphics[scale=0.35]{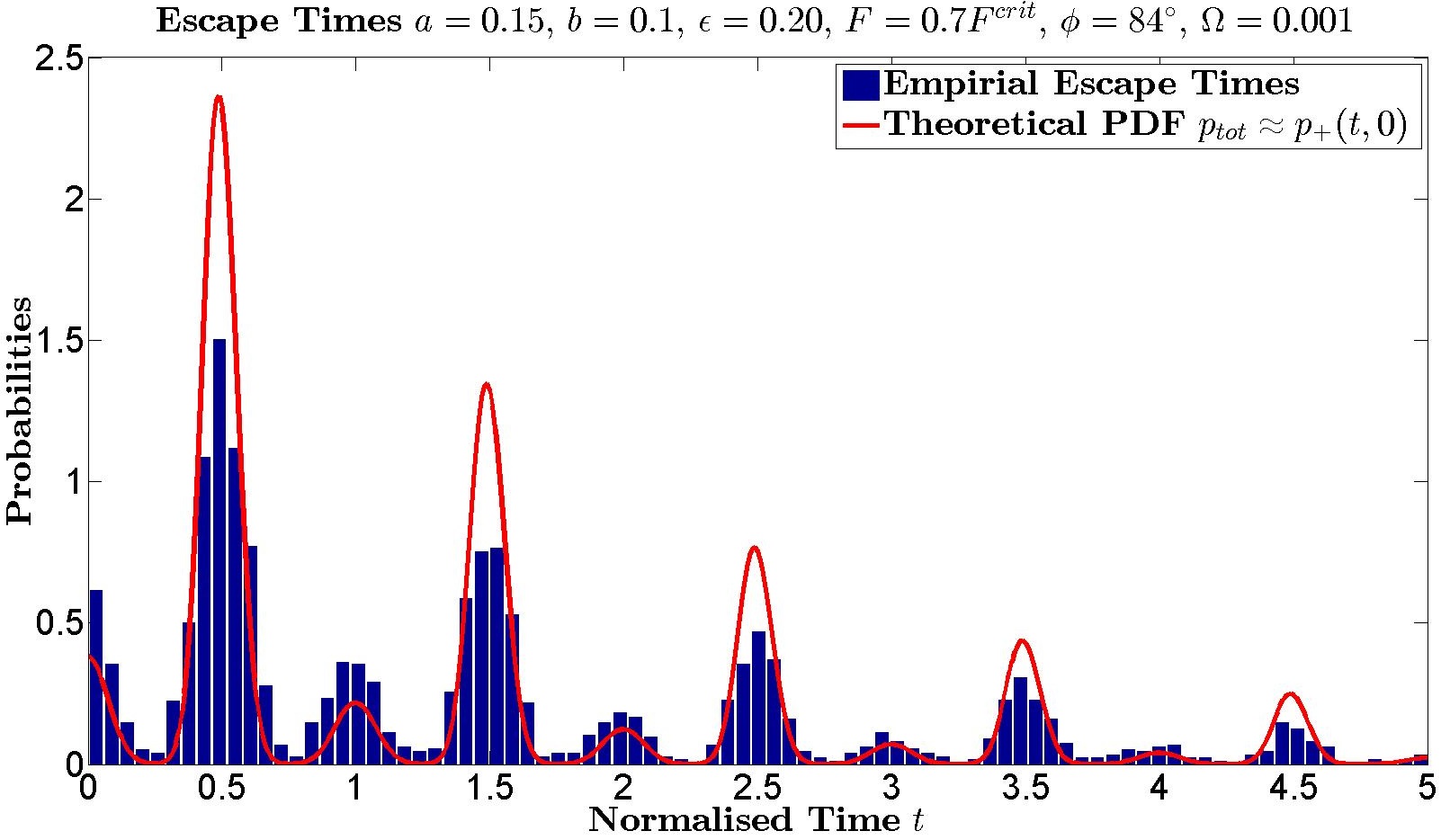}}
\caption{
This is an example of the Intermediate Frequency.
}
\label{chap_8_g75_p84_e20_pdf}
\end{figure}

\begin{figure}[H]
\centerline{\includegraphics[scale=0.35]{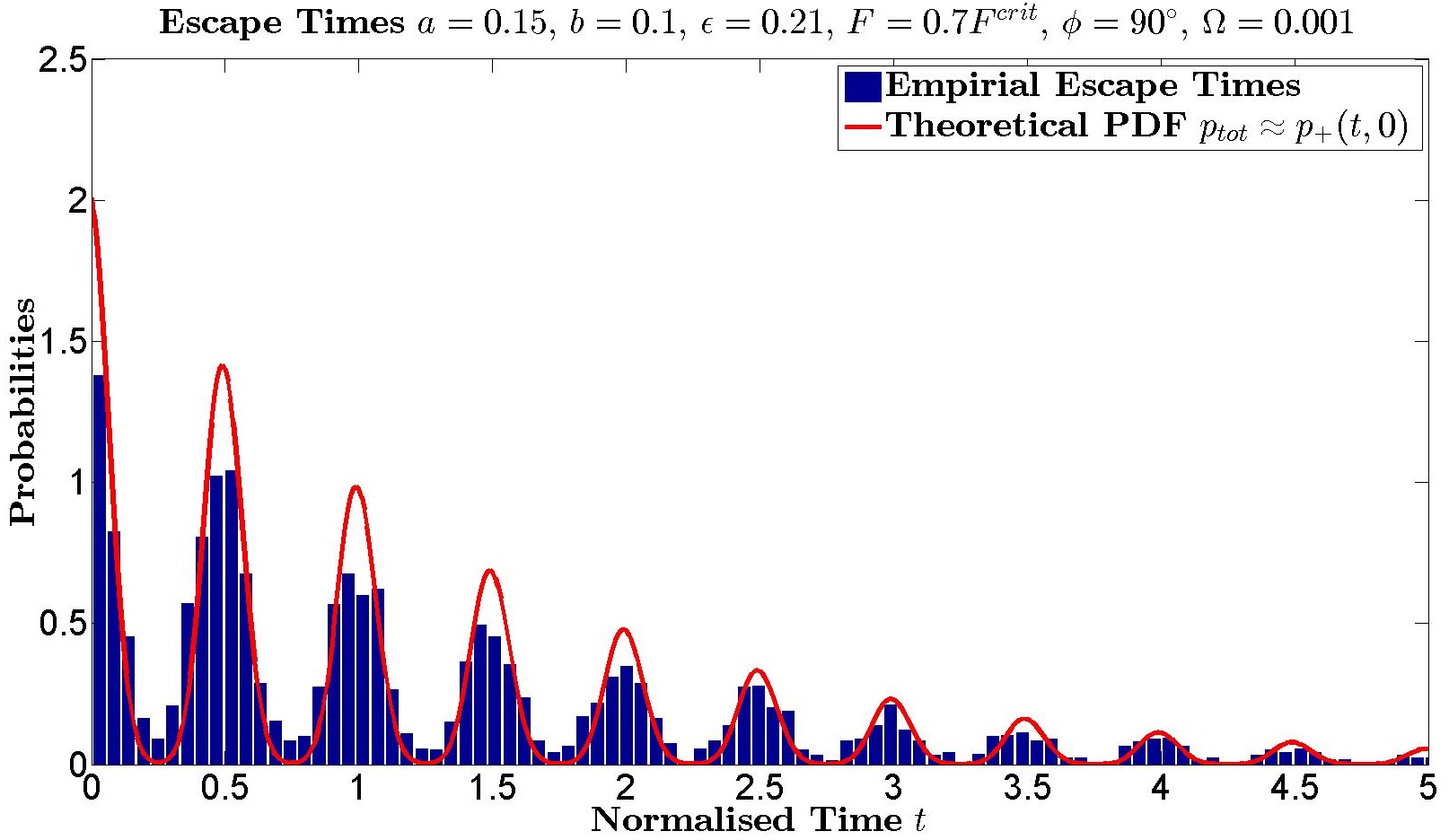}}
\caption{This is an example of the Double frequency.}
\label{chap_8_g75_p90_e21_pdf}
\end{figure}

\noindent It is important to note that Figures \ref{chap_8_g75_p0_e18_pdf}, \ref{chap_8_g75_p84_e20_pdf}
and 
\ref{chap_8_g75_p90_e21_pdf}
are histograms 
of the actual times it took to escape from either wells without differentiation between wells on the left or right. 
The times of entrance into the wells are not shown.
The PDF used is $p_{tot}(\cdot)$ which is being approximated by 
$p_{tot}(t)\approx p_+(t,0)$.

These escape times can be analysed in a different way. 
Let $u$ be the time of entrance into a well and $t$ the time of exit from a well. 
Figures \ref{chap_8_g75_p0_e18_pdf}, \ref{chap_8_g75_p84_e20_pdf} and 
\ref{chap_8_g75_p90_e21_pdf}
are therefore histograms of the $(t-u)$ for both left and right escapes combined.
Thus $0\leq mod(u,T)\leq1$ is the phase of entrance into a well and $mod(t-u,T)$ is the escape time itself in normalised time. 
Such an analysis is done for the times in Figure \ref{chap_8_g75_p0_e18_pdf}
for both the left and right wells respectively. 

\begin{figure}[H]
\centerline{\includegraphics[scale=0.35]{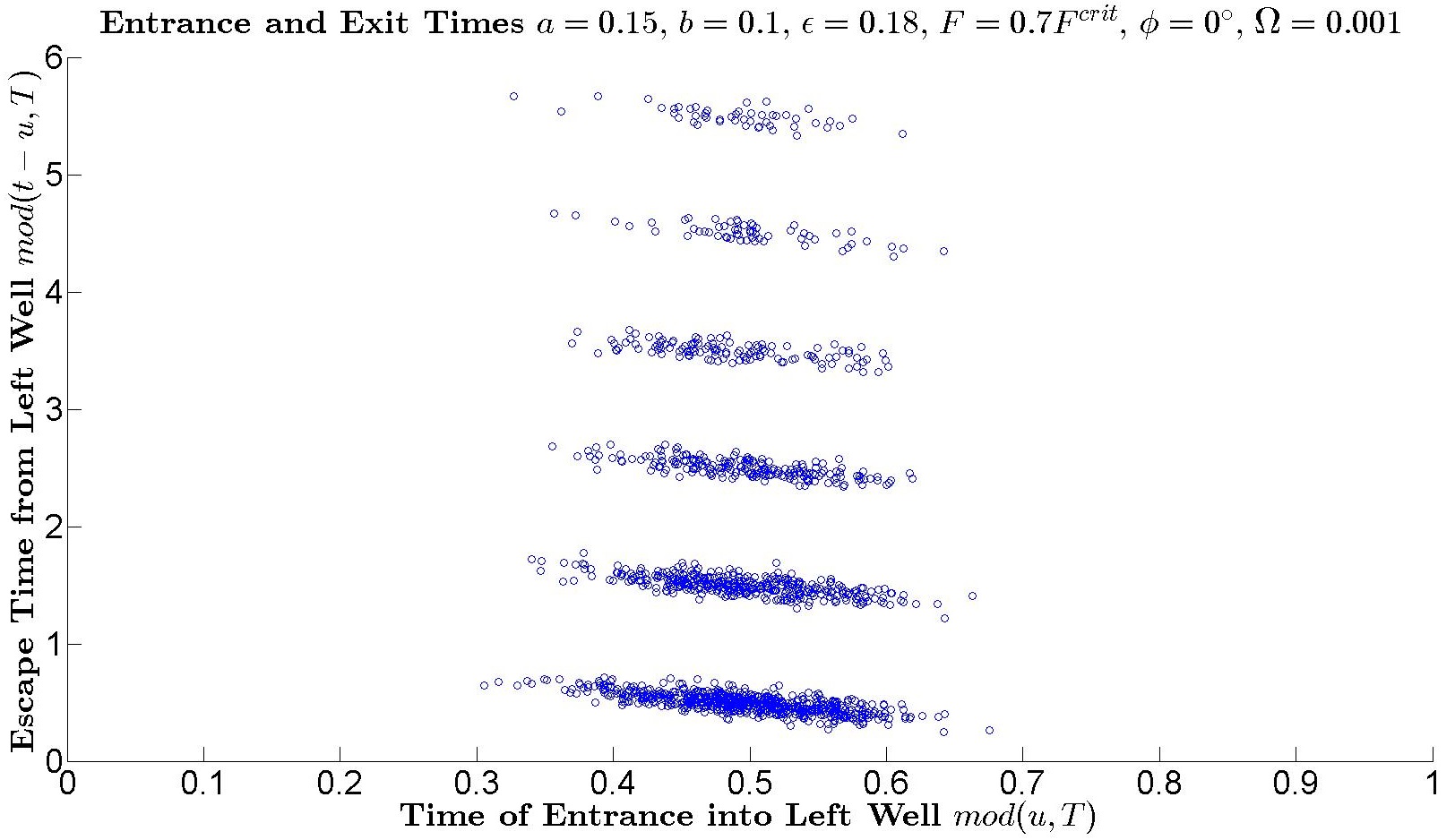}}
\caption{The $u$ is the time of entrance into the well and $t$ is the time of exit from the well.}
\label{chap_8_scatter_g75_p0_e18_pmaT}
\end{figure}

\begin{figure}[H]
\centerline{\includegraphics[scale=0.35]{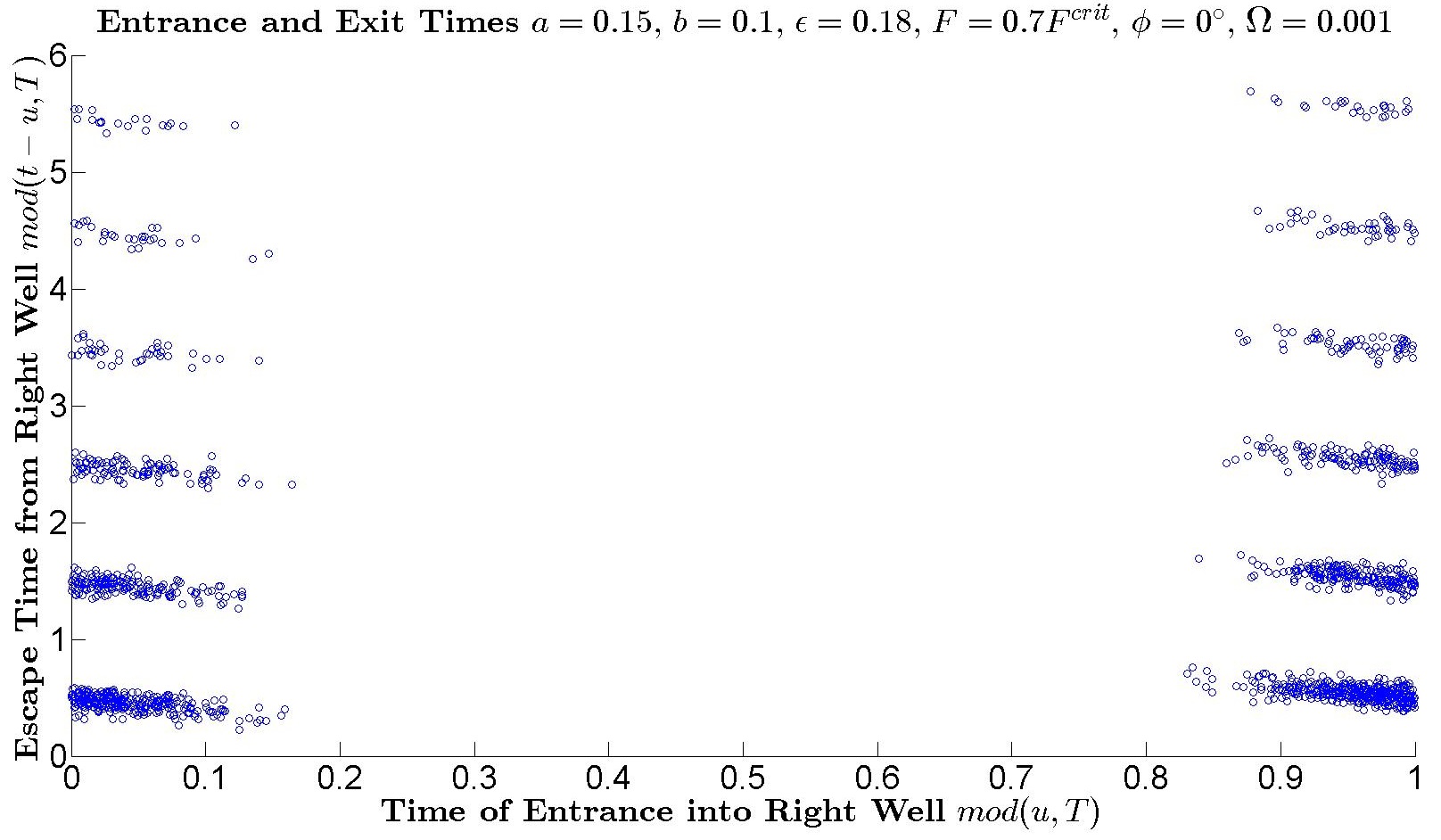}}
\caption{The $u$ is the time of entrance into the well and $t$ is the time of exit from the well.}
\label{chap_8_scatter_g75_p0_e18_ppaT}
\end{figure}

\noindent 
Notice the general behaviour of the data for $mod(u,T)$ and $mod(t-u,T)$. 
For the $\phi=0^\circ$ case the wells are alternating and one well is higher than the other. 
Entrance into the left well tend to occur near $u=0.5$ and entrance into the right well tend to occur near $u=0$ and $u=1$. 
For $\phi=90^\circ$ the wells are synchronised and are always at the same height as each other. 
Entrance and exit to and from either well tend to occur at $u=0$, $u=0.5$ and $u=1$.

Notice also  in Figure \ref{chap_8_scatter_g75_p0_e18_pmaT}
the data points are tiled near $0.5$. 
This seems to suggest that 
 the use of the Dirac delta function to approximate $p_{tot}\approx p_+(t,0)$
(see Section \ref{chap_approx_pdf}) may not be very good. 
The main problem here is the fact that 
we do not have an explicit formula for a probability measure of the time of entrance into a well, that is we do not have expressions for $m_-(u)$ and $m_+(u)$. 
This motivates us into developing the conditional KS test. 

We want to test whether the escape times we have measured are really distributed by the conditional PDFs $p_-(t,u)$ and $p_+(t,u)$. 
This is testing the conditional null hypothesis. 
Define the conditional CDFs by 
\begin{align*}
F^-_u(t)&=\int_u^tp_-(s,u)\,ds=1-\exp\left\{-\int_u^tR_{-1+1}(s)\,ds\right\}\\[0.5em]
F^+_u(t)&=\int_u^tp_-(s,u)\,ds=1-\exp\left\{-\int_u^tR_{-1+1}(s)\,ds\right\}
\end{align*}
The time coordinates of the entrance and exit from the  wells are collected. 
These are 
\begin{align*}
\left(
\begin{array}{cccc}
u_1&u_2&\ldots&u_n\\
t_1&t_2&\ldots&t_n
\end{array}
\right)
\end{align*}
where $u_i$ is the time coordinate of the $i$th entrance into a well and $t_i$ is the time coordinate of the $i$th exit from a well. 
The conditional KS statistic is calculated by 
\begin{align*}
S_n^-=\sup_{x\in[0,1]}
\left\Vert 
\frac{1}{n}
\sum_{i=1}^n\mathbf{1}_{[0,x]}
\left(
F^-_{u_i}(t_i)
-x
\right)
\right\Vert
\quad \text{and} \quad
S_n^+=\sup_{x\in[0,1]}
\left\Vert 
\frac{1}{n}
\sum_{i=1}^n\mathbf{1}_{[0,x]}
\left(
F^+_{u_i}(t_i)
-x
\right)
\right\Vert
\end{align*}
where in $S_n^-$ we sum over the time coordinates of entrance and exit to and from the left well
and in $S_n^+$ we sum over the time coordinates of entrance and exit to and from the right well. 
Recall that if the conditional null hypothesis is true then $S_n^-$ and $S_n^+$ are asymptotically distributed by 
\begin{align*}
\lim_{n\longrightarrow \infty} P(\sqrt{n} S_n \leq x)=Q(x)
\quad \text{where} \quad  
Q(x)=1-2\sum_{k=1}^\infty (-1)^{k-1}e^{-2k^2x^2}
\end{align*}
We want  99\% confidence. 
Note that 
\begin{align*}
P\left(
\sqrt{n}S_n\leq 1.6920
\right)
=Q(1.6920)
=0.99
\end{align*}
The $Q\left(\sqrt{n}S_n\right)$ is also calculated. 
The smaller $Q\left(\sqrt{n}S_n\right)$ is the more certain we are in accepting the null hypothesis. 
A selection of some of the data being implemented with the conditional KS test are given below for various angles of the forcing $\phi$ and noise level $\epsilon$. 
These are examples of the KS test being implemented for the histograms of escape times just given in Figures \ref{chap_8_g75_p0_e18_pdf} and 
\ref{chap_8_g75_p90_e21_pdf}

\begin{figure}[H]
\centerline{\includegraphics[scale=0.35]{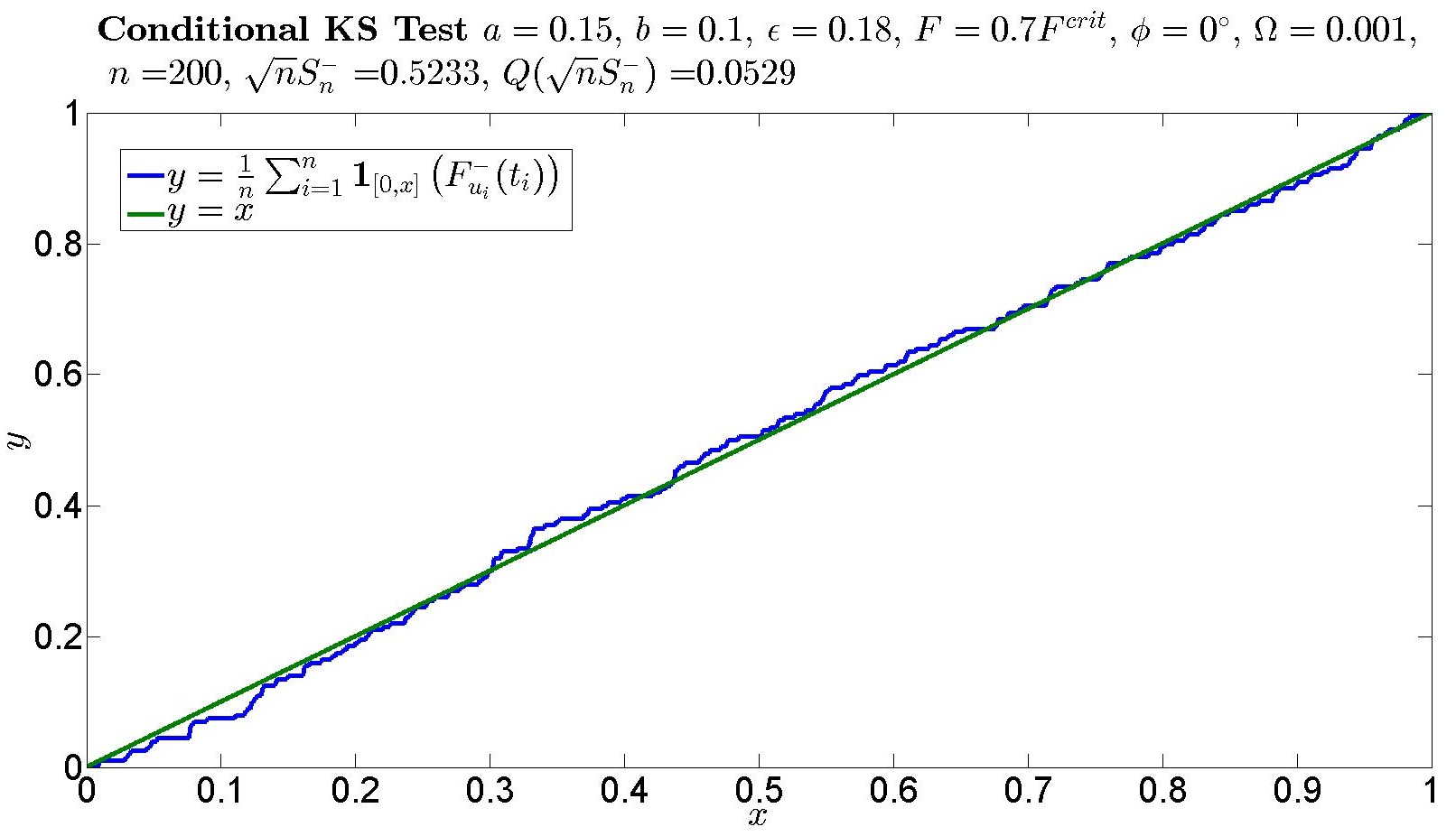}}
\caption{
This is an example of the conditional KS test being implemented for the data in Figure \ref{chap_8_g75_p0_e18_pdf}.
Note that $\epsilon=0.18$, $\phi=0^\circ$, $n=200$, $\sqrt{n}S^-_n=0.5233$ and $Q\left(\sqrt{n}S^-_n\right)=0.0529$.
}
\end{figure}

\begin{figure}[H]
\centerline{\includegraphics[scale=0.35]{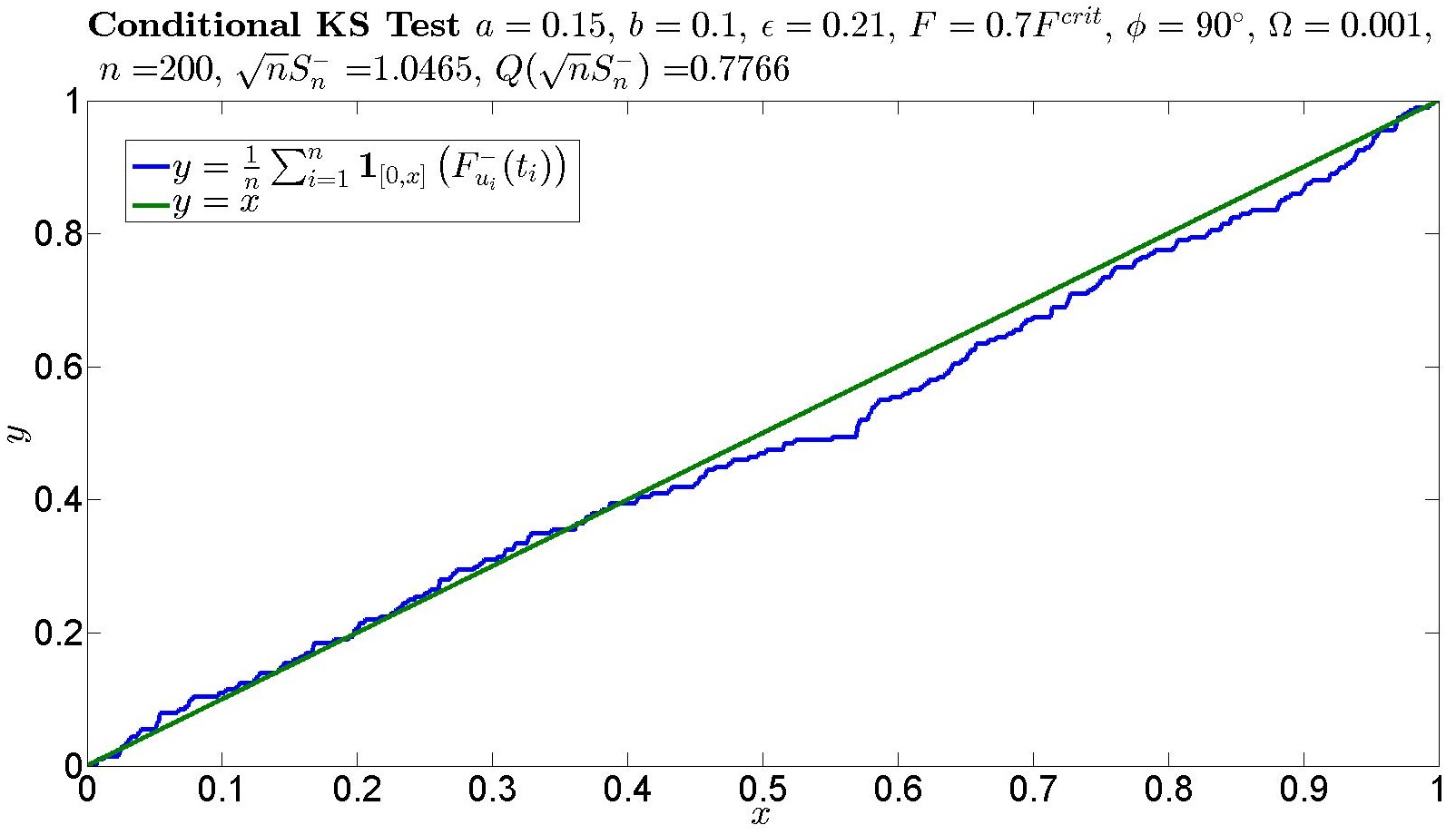}}
\caption{
This is an example of the conditional KS test being implemented for the data in Figure \ref{chap_8_g75_p90_e21_pdf}.
Note that $\epsilon=0.21$, $\phi=90^\circ$, $n=200$, $\sqrt{n}S^-_n=1.0465$ and $Q\left(\sqrt{n}S^-_n\right)=0.7766$.
}\label{chap_8_g80_p90_e21_m1}
\end{figure}

\subsubsection{Interpretation of the Escape Time and Conditional KS Test Analysis}
When $\phi=0^\circ$ there were peaks in the empirical PDF of the escape times. 
These occurred at times $\frac{1}{2}T$, $\frac{3}{2}T$, $\frac{5}{2}T$, \ldots. 
This effect we call the Single frequency. 
When $\phi=90^\circ$ the peaks occurred  at 
$\frac{1}{2}T$, $\frac{3}{2}T$, $\frac{5}{2}T$, \ldots
and $0$, $T$, $2T$, $3T$, $4T$, \ldots. 
This effect we call the Double Frequency. 
When $0^\circ<\phi<90^\circ$ an intermediate effect is seen. 
There were major peaks at 
$\frac{1}{2}T$, $\frac{3}{2}T$, $\frac{5}{2}T$, \ldots
and minor peaks at 
$0$, $T$, $2T$, $3T$, $4T$.

The behaviour of the Single, Intermediate and Double Frequencies can be explained geometrically. 
When the height between a well and a saddle is minimum, the optimal probability of escape has occurred. 
When $\phi=0^\circ$ the  frequency of the return of the optimal probability of escape is the same as the driving frequency  $\Omega$. 
This optimal probability comes back very $T$ which is once in a period. 
When $\phi=90^\circ$ the  frequency of the return of the optimal probability of escape is the double the driving frequency at $2\Omega$. 
This optimal probability comes back very $\frac{T}{2}$ which is twice in a period. 
This explains why the peaks in the Single and Double Frequencies are seen where they are.

As the angle changed from $\phi=0^\circ$ to $\phi=90^\circ$ the Single Frequency gradually changes into the Double Frequency with the Intermediate Frequency seen in between. 
Thus  the angle of the forcing is leaving a mark in the PDFs of escape times.

When the conditional KS test was implemented, the functions
\begin{align*}
y_0(x)=x, 
\quad 
y_-(x)=\sum_{i=1}^n\mathbf{1}_{[0,x]}
\left(
F^-_{u_i}(t_i)
\right)
\quad \text{and} \quad 
y_+(x)=\sum_{i=1}^n\mathbf{1}_{[0,x]}
\left(
F^+_{u_i}(t_i)
\right)
\end{align*}
were used to calculate the following distances which are the conditional KS statistics
\begin{align*}
S^-_n=\left\Vert y_0-y_-\right\Vert_\infty
\quad \text{and} \quad 
S^+_n=\left\Vert y_0-y_+\right\Vert_\infty
\end{align*}
It is reasonable to say that $y_-(\cdot)$ and $y_+(\cdot)$ were close enough to $y_0(\cdot)$ that we can accept the conditional null hypothesis. 
This can be seen and judged graphically with $S^-_n$ and $S^+_n$ calculated as well. 
This is an example of the conditional KS test giving a reasonable result.

\subsection{Remarks on Analysis of Stochastic Resonance}
There are a few subtleties, heavily dependent approximations and setbacks to the analysis which is worth mentioning here. 

\subsubsection{Remarks on Implementing the Conditional KS Test}
Notice that all the theories developed about the KS Test were based on the assumption that the null hypothesis is true. 
This means strictly speaking a small KS statistic, that is a small $S_n^-$ or $S_n^+$ does not immediately allow us to accept the null hypothesis but good reasons not to reject it. 
Also when there were many transitions, that is for large $n$, the terms $Q(\sqrt{n}S_n^-)$ and $Q(\sqrt{n}S_n^+)$ were also calculated.
The smaller  $Q(\sqrt{n}S_n^-)$ and $Q(\sqrt{n}S_n^+)$ are the more confidence we have in not rejecting the null hypothesis. 
This is because 
for very large $n$, we would expect
\begin{align*}
\lim_{n\rightarrow\infty}\sqrt{n}S_n^-=0
\quad \text{and} \quad 
\lim_{n\rightarrow\infty}\sqrt{n}S_n^+=0
\end{align*}
so the smaller $Q(\sqrt{n}S_n^-)$ and $Q(\sqrt{n}S_n^+)$ are the more certain we are in not rejecting the null hypothesis. 
Note that we have used $n=200$ transitions for implementing the KS test. This still works with sparse data. For examples with few data say $n=20$ see the thesis \cite{tommy_thesis}. 

\subsubsection{Remarks on Adiabatic Approximation}
Notice that in the PDFs $p_-(t,u)$, $p_+(t,u)$ and $p_{tot}(t)$ expressions for the escape rates $R_{-1+1}(t)$ and $R_{+1-1}(t)$ were required. 
These rates were also required for the conditional KS test. 
Strictly speaking these rates are dependent on the driving frequency $\Omega$, 
but we stress that these rates were calculated using Kramers formula as though the particle is escaping from a static potential. 
This is the adiabatic approximation where an oscillatory potential is approximated by a static potential.

It is worth summarising all the approximations which the analysis of the data have been based.
There is the small noise approximation and slow forcing approximation from Kramers formula, the adiabatic approximation and the perfect phase approximation where $p_{tot}$ is approximated by  $p_{tot}\approx p_{tot}(t,0)$.

\section{Conclusion}

In this paper we have considered the following problem. 
Let $X^\epsilon_t$ be a stochastic process in $\mathbb{R}^2$ which is described by the the SDE 
\begin{align*}
dX^\epsilon_t=b\left(X^\epsilon_t,t\right)dt+\epsilon\,dW_t
\end{align*}
and the drift term $b(\cdot,\cdot)$ is expressed by  
\begin{align*}
b(x,t)=-\nabla V_0 (x) + F\cos \Omega t 
\end{align*}
where $V_0:\mathbb{R}^2\longrightarrow \mathbb{R}$ is a time independent function, the unperturbed potential, with two metastable states, and two pathways between these states. The $F\in\mathbb{R}^r$ is the magnitude of the forcing and $\Omega$ is the driving frequency. 
Our aim was to see characteristics of the trajectory $X^\epsilon_t$ which only depends on the qualitative structure of $V_0$, that is the existence of two metastable states and two pathways. 

For concreteness we considered a model, which we call the  Mexican Hat Toy Model
\begin{align*}
V_0(x,y)=\frac{1}{4}r^4-\frac{1}{2}r^2-ax^2+by^2
\quad \text{where} \quad r=\sqrt{x^2+y^2}
\end{align*}
The magnitude and angle of the forcing are given by
\begin{align*}
F=\sqrt{F_x^2+F_y^2}
\quad \text{and} \quad 
\phi=\tan^{-1}\left(\frac{F_y}{F_x}\right)
\end{align*}
The angle $\phi$ and noise level $\epsilon$ were varied. 
At $\phi=0$ the wells were alternating, that is one well is higher than the other, in the sense that it is easer to jump from one well to the other than vice versa.
At $\phi=90^\circ$ the wells are synchronised, that is both wells are always at the same height but the heights of the barrier for the two paths is alternating.

A potential with two pathways has never been considered before in the context of stochastic resonance. 
We studied it using approximation techniques and direct simulations. 
In an adiabatic regime the Freidlin-Wentzell theory allows one to give analytical solutions of the jump type distributions asymptotically in this regime. 
This theory predicted the appearance of additional resonance peaks at half the frequency when the angle approaches $\phi=90^\circ$.

We simulated $X_t^\epsilon$ for different values of $\phi$ and $\epsilon$
and computed for the values of angle increasing from  $\phi=0$ to $\phi=90^\circ$  the six measures $M_1$, $M_2$, $M_3$, $M_4$, $M_5$ and $M_6$  as function of the noise level. 
The first major surprise was that the graphs showed less and less pronounced minima (or maxima) and hence suggests that the phenomena of stochastic resonance gets less and less pronounced, see Section \ref{conclusion_six_measures}.
The effect of resonance seems to disappear overall.

However, considering the path $X_t^\epsilon$ itself, one sees  that there may be nevertheless some synchronisation, see Figure        \ref{chap_8_path_p0},
\ref{chap_8_path_p84} and 
\ref{chap_8_path_p90}.
To properly quantify synchronisation we considered the histograms of the escape times, which to our knowledge has not been  considered thoroughly before. 
The histograms showed a clear periodicity and also the emergence of peaks at the Double Frequency for increasing angle. 
For a quantitative consideration we assume that the entrance time is in perfect phase 
(this is when $m_-(u)$ and $m_+(u)$ \emph{can} be approximated by Dirac delta functions). 
This gives for several cases good quantitative and in general good qualitative agreement with the combined adiabatic and small noise approximation.  
Summarizing, the theoretical and the simulation results are in very good agreement.
We want to stress that in the comparison no free parameters were present and so no fitting took place. 

The fact that the six measures are blind can be explained using Markov chain models approximating the SDE. As one expects from large deviation theory, for small noise and in an adiabatic regime the SDE can be approximated by a continuous time Markov chain.
In this Markov chain model we showed that the invariant measures are constant when $\phi=90^\circ$. 
Hence we expect that the invariant measure gives in the diffusion case equal weights to the left and the right well. 
Together, this gives us the following qualitative picture of the dynamics for any angle. 
At a fixed time the probability that one sees a jump from the left to the right well or vice versa has the same probability. 
However, conditioned on the phase and the direction of the last jump, for concreteness assume that it was at phase $u$ and from the left to the right
(that is to say the particle entered the well at time $u$)
the next jump will be at  phase which is near to a multiple of $T/2$
(that is to say the particle will leave the well near the times $t=nT/2$ where $n$ is an integer). 
The jump rates will be given by the height of the potential barriers.

At $\phi=90^\circ$, the path $X_t^\varepsilon$ and $-X_t^\varepsilon$ will appear with the same probability if one starts in the invariant measure. 
This explains why the six measures are all insensitive in this case. 
The equilibration happens because the process will skip some of the jump opportunities and in this way the left-right synchronization will get lost quickly.

This new phenomena we discovered has added an additional motivation to the observation of Hermann, Imkeller, Pavlyukevich, Berglund and Gentz that the appropriate 
consideration has to be on the path level. 
Averaged quantities like the six measures can be very misleading and masking the real behaviour of the system. 
The escape time distribution shows a clear signal of stochastic resonance in accordance with the theoretical consideration. 
The presence of a two pathways manifests itself in an appearance of peaks at the Double Frequency. 
We showed that adiabatic small noise approximation gives a good statistical model.
We demonstrated that this appearance can be detected also when only a limited number of transitions is available. 
Our analysis provides us with a clear footprint indicating the existence of a second pathway. 
The angle dependence of our result should also allow us to predict the orientation of the saddles with respect to the wells.

\appendix

\section{Numerical Methods for calculating $M_5$ and $M_6$}
\label{appendix_six_measures}
Here we present how we computed $M_5$ and $M_6$ numerically.
This is how $M_5$ and $M_6$ are calculated in theory 
\begin{align*}
M_5&=\int_0^T
\phi^-(t)\ln\left(\frac{\phi^-(t)}{\overline{\nu}_-(t)}\right)+
\phi^+(t)\ln\left(\frac{\phi^+(t)}{\overline{\nu}_+(t)}\right)
dt\\
M_6&=\int^T_0
-\overline{\nu}_-(t)\ln\overline{\nu}_-(t)
-\overline{\nu}_+(t)\ln\overline{\nu}_+(t)\,
dt
\end{align*}
where 
\begin{align*}
\phi^-(t)&=
\left\{
\begin{array}{c}
1 \quad \text{if} \quad mod(t,T)\leq T/2\\
0 \quad \text{if} \quad mod(t,T)> T/2
\end{array}
\right.\\[0.5em]
\phi^+(t)&=
\left\{
\begin{array}{c}
0 \quad \text{if} \quad mod(t,T)\leq T/2\\
1 \quad \text{if} \quad mod(t,T)> T/2
\end{array}
\right.
\end{align*}
When the invariant measures are generated numerically they are finite discrete objects described by 
\begin{align*}
\nu_-&=\left\{\nu^-_1, \nu^-_2, \cdots, \nu^-_N\right\}\\
\nu_+&=\left\{\nu^+_1, \nu^+_2, \cdots, \nu^+_N\right\}
\end{align*}
The real invariant measure were close to zero sometimes and in the numerical approximation they became actually zero or even negative which lead to numerical artefacts.
Note that 
\begin{align*}
\lim_{x\longrightarrow0}\ln\left(\frac{1}{x}\right)=\infty
\quad \text{and} \quad 
\lim_{x\longrightarrow0}x\ln\left(x\right)=0
\end{align*}
Define
\begin{align*}
\nu_-^{lim}&=\min_{\substack{i=1,2,\ldots,N\\\nu^-_i>0}}\left\{\nu^-_1, \nu^-_2, \cdots, \nu^-_N\right\}\\
\nu_+^{lim}&=\min_{\substack{i=1,2,\ldots,N\\\nu^+_i>0}}\left\{\nu^+_1, \nu^+_2, \cdots, \nu^+_N\right\}
\end{align*}
The quantities $M_5$ and $M_6$ are computed numerically in the following way  
\begin{align*}
M_5&=\sum_{\substack{i\leq\frac{N}{2}\\\nu^-_i>0}}t_{step}\ln\left(\frac{1}{\nu^-_i}\right)
+
\sum_{\substack{i\leq\frac{N}{2}\\\nu^-_i\leq0}}t_{step}\ln\left(\frac{1}{\nu_-^{lim}}\right)
+
\sum_{\substack{i>\frac{N}{2}\\\nu^+_i>0}}t_{step}\ln\left(\frac{1}{\nu^+_i}\right)
+
\sum_{\substack{i>\frac{N}{2}\\\nu^+_i\leq0}}t_{step}\ln\left(\frac{1}{\nu^{lim}_+}\right)\\[0.5em]
M_6&=\sum_{\substack{i=1,2,\cdots,N\\\nu^-_i>0}}\nu^-_i\ln(\nu^-_i)(-t_{step})\quad+\quad
\sum_{\substack{i=1,2,\cdots,N\\\nu^+_i>0}}\nu^+_i\ln(\nu^+_i)(-t_{step})
\end{align*}

\addcontentsline{toc}{section}{References}
\bibliographystyle{ieeetr}
\bibliography{paper_reportreferences}

\end{document}